\newtheorem{theorem}{Theorem}[section]
\newtheorem{remark}{Remark}[section]
\newtheorem{lemma}{Lemma}[section]
\title{The Neumann problem of  special Lagrangian type equations}
\author{Guohuan Qiu}
\author{Dekai Zhang}
\address{Institute of Mathematics, Academy of Mathematics and Systems Science, Chinese Academy of Sciences, No.55 Zhongguancun East Road, 100190, Beijing, China}
\email{qiugh@amss.ac.cn}
\address{School of Mathematical Sciences,  East China Normal Universit, Shanghai, China, 200241
}
\email{dkzhang@math.ecnu.edu.cn}
\begin{document}

\begin{abstract}
We study the Neumann problem for special Lagrangian type equations with critical and supercritical phases. These equations naturally generalize the special Lagrangian equation and the k-Hessian equation. By establishing uniform a priori estimates up to the second order, we obtain the existence result using the continuity method. The new technical aspect is our direct proof of 
boundary double normal derivative estimates. In particular, we directly prove the double normal estimates for the 2-Hessian equation in dimension 3. Moreover, we solve the classical Neumann problem by proving the uniform gradient estimate.\\
%\\
%\textbf{Keywords:} Special Lagrangian type equations; Neumann problems; Gradient estimates; Second order estimates
%\\
%\textbf{Mathematics Subject Classification} \quad	35J25 $\cdot$  35J60
\end{abstract}

\maketitle

\section{Introduction}
The special Lagrangian equation 
\begin{equation}
 \sum_{i=1}^{n}\arctan\lambda_{i} (D^2 u)=\Theta  \label{SLE}
\end{equation}
was introduced by Harvey-Lawson \cite{harvey1982calibrated} back in 1982. Its solution $u$ was demonstrated to possess the property that the graph $(x, \nabla u) \in \mathbb{R}^n \times \mathbb{R}^n$ forms a Lagrangian submanifold that is absolutely volume-minimizing.
The Dirichlet problem of this equation was solved by Caffarelli-Nirenberg-Spruck \cite{CNS-1,CNS-3} for $\Theta = \frac{(n-2)\pi}{2}$ when $n$ is even, and $\Theta= \frac{(n-1)\pi}{2}
$ when $n$ is odd, under a condition on the geometry of the domain $\Omega$. The existence and uniqueness of the Dirichlet problem for the viscosity solution of \eqref{SLE} were demonstrated by Harvey-Lawson in \cite{HarveyLawson09CPAM}, and smooth solutions for critical and supercritical phases were obtained by Yuan in \cite{Yuan16Slag}. The interior regularity of the special Lagrangian equations \eqref{SLE} for both critical and supercritical phases were proved Warren-Yuan \cite{warren2009hessian,WarrenYuan2010AJM} and Wang-Yuan \cite{WY11}. Chen-Warren-Yuan also obtained results for the convex case in \cite{chen2009priori} and \cite{ChenShankarYuan}. 
In \cite{BrendleWarrenJDG10}, Brendle-Warren studied a second boundary value problem for the special Lagrangian equation.
The special Lagrangian equation on a compact Kähler manifold also arises from mirror symmetry and is called the deformed Hermitian Yang-Mills equation, which was firstly studied by Jacob-Yau \cite{JacobYau2017MathAnn}. See \cite{CJY2020Cambridge, CollinsYau2021AnnPDE, ChenGao2021,Pingali2022APDE, FuYauZhang2021, Lin2023Adv} for recent progress.

One direction of generalization of the special Lagrangian equation is studied by considering  the Lagrangian mean curvature equation
\begin{equation}
    \sum\limits^n_{i=1} \arctan \lambda_i(D^2 u) = \Theta(x),\label{LMCE}
\end{equation}
where $(x,Du) \in \mathbb{R}^n \times \mathbb{R}^n $ is a submanifold with bounded mean curvature. The interior estimates and regularity of equation \eqref{LMCE} have been investigated in several works, including \cite{BhattacharyaCVPDE21, bhattacharya2022note, bhattacharya2022gradient, BhattacharyaShankarCrelle, bhattacharya2020optimal, zhou2023hessian}. The Dirichlet problem for equation \eqref{LMCE} has been addressed in \cite{CollinsPicardWu, DinewAPDE19, cirantpayne2021matheng,harveylawson2021cvpde, bhattacharya2022gradient}. Additionally, the second boundary problem of equation \eqref{LMCE} is proven in \cite{WangHuangBaoCVPDE}. 

Another similar generalization of the special Lagrangian equation is given by the following special Lagrangian-type equations:
\begin{align}
    \sum_i \arctan \frac{\lambda_i (D^2 u)}{f(x)} = \Theta. \label{slte1}
\end{align} 
Our motivation for studying the equation in the form \eqref{slte1} comes from an observation made by the first author while investigating the interior $C^{1,1}$ estimate of $\sigma_2 = f^2$. In \cite{qiu2017interior, qiu2019interior}, the graph $(x, Du)$, where $u$ satisfies the equation (\ref{slte1}), can also be regarded as a submanifold in $(\mathbb{R}^{n}\times\mathbb{R}^{n}, f^2 (x)dx^{2} + dy^{2})$ with bounded mean curvature.  The interior regularity of equation \eqref{slte1} was studied in \cite{qiu2017interior,zhou2023notes,lu2023interior}. Moreover, the algebraic form of equation \eqref{slte1} is
\begin{equation*}
   \cos\Theta \sum\limits _{1\leq 2k+1 \leq n}\frac{(-1)^{k}}{f(x)^{2k+1}} \sigma_{2k+1}(D^2 u)-\sin \Theta \sum\limits _{0 \leq 2k \leq n} \frac{(-1)^{k}}{f(x)^{2k}}  \sigma_{2k}(D^2 u)=0. 
\end{equation*}
So this is a special case of the mixed Hessian equations, as investigated by Krylov \cite{Krylov} and later by Guan-Zhang \cite{GuanZhang} in the following form:
\begin{equation}
\sigma_k (D^2 u) +\alpha(x) \sigma_{k-1} (D^2 u) = \sum^{k-2}_{l=0} \alpha_l (x) \sigma_k (D^2 u). \label{GuanZhang}
\end{equation}
These equations arise in various contexts, such as the problem of prescribing a convex combination of area measures. For more motivations behind studying equation \eqref{GuanZhang}, we refer to the paper by Guan-Zhang \cite{GuanZhang}.

In this paper, our aim is to explore the Neumann problem for the special Lagrangian type equation \eqref{slte1}. We consider it as a generalization of both the special Lagrangian equation and the $k$-Hessian equation, formulated as follows:
\begin{align}\label{GSLE}
\left\{
\begin{aligned}
    F(D^2u, x):=\sum_i \arctan \frac{\lambda_i (D^2 u)}{f(x)} = \Theta \quad in \quad \Omega, \\
    u_\nu = \varphi(x,u)\quad on \quad \partial \Omega,
    \end{aligned}
    \right.
\end{align}
where $\nu$ is the unit outward normal of $\partial \Omega$.

When $f=1$, this corresponds to the Neumann problem for the special Lagrangian equation. In the case of $n=2$ and $\Theta = \frac{\pi}{2}$, equation \eqref{SLE} transforms into the Monge-Amp\`ere equation. The Neumann problem for the Monge-Amp\`ere equation was successfully tackled by Lions-Trudinger-Urbas \cite{LTU1986}. When $n=3$ and $\Theta=\frac{\pi}{2}$, equation \eqref{slte1} is equivalent to $\sigma_2(D^2 u)=f^2$. In the paper \cite{TrudingerConj}, Trudinger raised the question of the solvability of the Neumann problem for the $k$-Hessian equation, spanning from balls to sufficiently smooth uniformly convex domains. This conjecture was later solved by Ma and the first author \cite{MaQiu2019CMP}. For the specific cases of $n=3$ with $\Theta =\pi$, or $n=4$ with $\Theta =\pi$, equation \eqref{slte1} can be expressed as $\frac{\sigma_3}{\sigma_1}=f^2$. Chen-Zhang \cite{ChenZhang2021BMS} extended Ma-Qiu's results to the Hessian quotient equation $\frac{\sigma_k }{\sigma_l} =f$. Regarding the special Lagrangian equation, the supercritical case, i.e., $\Theta>\frac{(n-2)\pi}{2}$, was resolved by Chen-Ma-Wei \cite{ChenMaWei2019JDE}, while the critical case, i.e., $\Theta=\frac{(n-2)\pi}{2}$, was addressed by Wang \cite{Wang2019CMS}. The Neumann problem for other types of equations has also been studied in \cite{ChenMaWei2019ATA, ChenMaZhang2021ActaSinica, Wang2019IJM}.

The Neumann boundary is another important boundary condition, aside from the Dirichlet boundary condition. It serves both as a condition for the existence of the equation and finds applications in proving isoperimetric inequalities. For instance, Cabre utilized the Neumann problem for the Laplace equation in \cite{Cabre08} to offer a straightforward new proof of the classical isoperimetric inequality. Additionally, in \cite{Brendle21}, Brendle employed solutions to the Neumann problem to establish the isoperimetric inequality for minimal subsubmanifolds in Euclidean Space. For fully nonlinear PDEs, solutions to the Neumann problem can also be employed to offer a new proof of Aleksandrov-Fenchel inequalities, as demonstrated by the first author and Xia in \cite{QiuXia19}, where the existence of the Neumann problem of $k$-Hessian equation was previously proven in \cite{MaQiu2019CMP}. Here, we consider the existence of the Neumann problem for the special Lagrangian type equation \eqref{slte1} and prove the following theorems.\\

\begin{theorem}\label{sltethm1}
Assume $\Omega\subset \mathbb{R}^n$ is a  strictly convex smooth domain.
Let $f\in C^{\infty}(\overline \Omega)$ be a positive function and $\phi\in C^{\infty}(\partial\Omega)$. 
Assume the constant $\Theta\in[\frac{(n-2) \pi}{2}, \frac{n\pi}{2})$. Then there exists a unique smooth solution solving
\begin{align}\label{slte22}
\left\{
\begin{aligned}
    \sum_{i=1}^n \arctan \frac{\lambda_i (D^2 u )}{f(x)} = \Theta \quad in \quad \Omega, \\
    u_\nu = -u+ \phi(x)\quad on \quad \partial \Omega.
    \end{aligned}
    \right.
\end{align}
Moreover, we have the following $C^{1,1}$ estimate up to the boundary
\begin{equation*}
    \lVert u \rVert_{C^{1,1}(\overline{\Omega})} \leq C(\lVert f^{-1} \rVert_{L^\infty}, \lVert f \rVert_{C^{1,1}(\overline{\Omega})},n, \lVert \phi \rVert_{C^{3}(\overline{\Omega})} ).
\end{equation*}

\end{theorem}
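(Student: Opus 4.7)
The plan is to implement the method of continuity, deforming from a trivially solvable problem to \eqref{slte22} along a one-parameter family of boundary value problems with the same oblique boundary operator $\partial_\nu + \mathrm{Id}$. The linearization of $F$ is strictly elliptic, since $\partial F/\partial \lambda_i = f/(f^2+\lambda_i^2) > 0$, and on the supercritical level set $\{\lambda:\sum\arctan(\lambda_i/f)\geq (n-2)\pi/2\}$ the operator is concave by Yuan's convexity lemma; combined with the favorable sign of the zeroth-order term in the boundary operator, openness follows from the linear theory of oblique BVPs and uniqueness from the comparison principle. Thus the entire task reduces to establishing a uniform a priori $C^{1,1}(\overline\Omega)$ bound, after which Evans--Krylov yields $C^{2,\alpha}$ and Schauder bootstraps to $C^\infty$.

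The $C^0$ estimate is immediate from the monotonicity of $F$ and the normalization $F(0,x)=0$: an interior extremum would force $F(D^2u,x)\leq 0$, contradicting $\Theta\geq (n-2)\pi/2 > 0$ (the borderline $n=2, \Theta=0$ case being degenerate and handled separately), so extrema occur on $\partial\Omega$ where $u_\nu = -u+\phi$ directly yields $\min_{\partial\Omega}\phi \leq u\leq \max_{\partial\Omega}\phi$. For the gradient estimate I would apply the maximum principle to an auxiliary function of the form $|Du|^2 e^{Mu}$ with large $M$, using the concavity of $F$ to control the interior term and exploiting the strict convexity of $\partial\Omega$ together with the Neumann condition to absorb the unwanted oblique derivative on the boundary.

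The decisive step is the $C^2$ estimate. Interior Hessian bounds follow from the standard test function of type $W=\log\lambda_{\max}(D^2u)+h(|Du|^2)+\gamma|x|^2$, combined with the concavity of $F$ on the supercritical set. On the boundary, after flattening so that $\nu=-e_n$, the tangential-tangential entries $u_{\alpha\beta}$ ($\alpha,\beta<n$) are controlled by differentiating the Neumann condition twice tangentially together with the $C^1$ bound, and the mixed entries $u_{\alpha n}$ follow from a single tangential differentiation combined with a strict-convexity barrier. The principal obstacle, and the stated novelty of the paper, is the pure normal second derivative $u_{nn}|_{\partial\Omega}$: it cannot be solved from the equation without already knowing $\lambda_{\max}$ is bounded, and the previous approach of Ma--Qiu for the $\sigma_k$ equation proceeded indirectly. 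I would design a boundary-localized auxiliary function involving $u_{nn}$ together with appropriate lower-order corrections, tuned so that at an interior maximum one reduces to the already-established interior $C^2$ estimate, while at a boundary maximum the Neumann condition together with the positive second fundamental form of $\partial\Omega$ forces the oblique derivative to have the wrong sign unless $u_{nn}$ is bounded. Executing this double normal argument directly — rather than through the previous indirect route — is the main technical hurdle; once it is in place the continuity method closes and the stated $C^{1,1}$ estimate is delivered.
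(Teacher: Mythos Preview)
Your broad strategy (continuity method, then $C^0 \to C^1 \to C^2$ followed by Evans--Krylov and Schauder) matches the paper. The essential divergence is in the second-order step, both in the order of the arguments and in the nature of the double normal barrier.

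The paper establishes the double normal bound \emph{first}, before any global or interior $C^2$ estimate, and does so with a \emph{first-order} barrier: on the collar $\Omega_\mu$ one sets $\overline P = (u_\nu-\varphi) - \tfrac12(u_\nu-\varphi)^2 - B_0 h$, where $h=-d+d^2$. Since $u_\nu-\varphi\equiv 0$ on $\partial\Omega$, one has $\overline P\equiv 0$ there. The core computation shows $\sum_{i,j}F^{ij}\overline P_{ij}<0$ at any putative interior minimum, via the inequality $\sum_i F^{ii}|\lambda_i|\le A_0\mathcal F + \tfrac{1}{2C}\sum_i F^{ii}\lambda_i^2(\nu^i)^2$ (split on whether $|\lambda_n|$ is large). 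Hence $\overline P$ attains its minimum on $\partial\Omega$, so $\overline P_\nu\le 0$ at every boundary point, which immediately gives $u_{\nu\nu}\le C$; an analogous $\underline P$ gives the lower bound. Only $C^1$ information is needed as input. The global $C^2$ estimate is then obtained by the Lions--Trudinger--Urbas-type function $V(x,\xi)=u_{\xi\xi}-v(x,\xi)+\tfrac12|Du|^2+\tfrac{B}{2}|x|^2$, which reduces $\sup_{\overline\Omega}|D^2u|$ to $\sup_{\partial\Omega}|u_{\nu\nu}|$.

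Your plan reverses this order (interior $C^2$ first, then boundary components, with the double normal last) and proposes a barrier ``involving $u_{nn}$''. That is in spirit the earlier Ma--Qiu / Chen--Ma--Wei structure, and it can be made to work, but it is substantially more involved: a second-order barrier drags in the twice-differentiated equation and a delicate case analysis. The paper's advertised novelty is precisely that the first-order quantity $u_\nu-\varphi$ suffices, so the double normal bound is obtained directly and cheaply, independently of the rest of the $C^2$ estimate.

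One minor gap: your $C^0$ argument is incomplete for the lower bound. At an interior minimum $D^2u\ge 0$, hence $F(D^2u,x)\ge 0$, which does not contradict $F=\Theta>0$; subharmonicity only forces the \emph{maximum} onto $\partial\Omega$. The paper obtains the lower bound by comparison with the strict supersolution $v=C_0|x|^2/2$.
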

\begin{remark}
    The new technical aspect is our direct proof of 
boundary double normal derivative estimates. In particular, we directly prove the double normal estimates for the 2-Hessian equation in dimension 3.
\end{remark}
Our proof of Theorem \ref{sltethm1} is primarily based on the work of Ma-Qiu \cite{MaQiu2019CMP}. In \cite{MaQiu2019CMP}, they employ Lions-Trudinger-Urbas's technique in \cite{LTU1986CPAM} to transform the second order estimates from the interior to the boundary double normal derivative estimate. Then they construct a barrier function to establish the boundary double normal derivative estimate through an involved argument. The novel technical aspect of the proof of Theorem \ref{sltethm1} lies in utilizing the special properties of the special Lagrangian equation to provide a simplified proof of the boundary double normal derivative estimate before establishing the global second-order estimate, as detailed in subsection \ref{DDN}. 

Next we solved the classical Neumann problem for the special Lagrangian type equation. 
\begin{theorem}\label{sltethm2}
Assume $\Omega\subset \mathbb{R}^n$ is a  strictly convex smooth domain.
Let $f\in C^{\infty}(\overline \Omega)$ be a positive function and $\phi\in C^{\infty}(\overline \Omega)$. 
Assume the constant $\Theta\in[\frac{(n-2)\pi}{2}, \frac{n\pi}{2})$. Then there exist a unique smooth solution 
$u$ up to a constant and a unique constant $\lambda$ solving the following problem
\begin{align}\label{slte11}
\left\{
\begin{aligned}
    \sum_{i=1}^n \arctan \frac{\lambda_i (D^2 u)}{f(x)} = \Theta \quad in \quad \Omega, \\
    u_\nu = \lambda+ \phi(x) \quad on \quad \partial \Omega.
    \end{aligned}
    \right.
\end{align}

\end{theorem}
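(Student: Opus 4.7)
The plan is an $\epsilon$-approximation reducing Theorem \ref{sltethm2} to Theorem \ref{sltethm1}. For each $\epsilon\in(0,1]$, invoke a mild variant of Theorem \ref{sltethm1} with the boundary condition $u^\epsilon_\nu = -\epsilon u^\epsilon + \phi$ on $\partial\Omega$: the argument carries over verbatim because the damping coefficient $-\epsilon$ plays the same sign role as $-1$, so a unique smooth $u^\epsilon$ exists. The key point is that the $C^{1,1}$ bound from Theorem \ref{sltethm1} can be tracked to be independent of $\epsilon\in(0,1]$, and when combined with the uniform gradient estimate advertised in the abstract, yields $\|Du^\epsilon\|_{L^\infty(\overline\Omega)} + \|D^2 u^\epsilon\|_{L^\infty(\overline\Omega)} \leq C$ with $C$ independent of $\epsilon$.

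Next I would normalize: fix $x_0\in\overline\Omega$ and set $v^\epsilon := u^\epsilon - u^\epsilon(x_0)$ and $\lambda^\epsilon := -\epsilon u^\epsilon(x_0)$. The uniform gradient bound immediately gives $\|v^\epsilon\|_{L^\infty}\leq C$. Evaluating the Neumann condition on $\partial\Omega$ yields $\epsilon u^\epsilon = \phi - u^\epsilon_\nu$, which is uniformly bounded on $\partial\Omega$; combined with the gradient bound this extends to all of $\overline\Omega$, so $\lambda^\epsilon$ is uniformly bounded. Passing to a subsequence $\epsilon_k\to 0$, Arzel\`a--Ascoli applied to the uniform $C^{1,1}$ bound produces $v^{\epsilon_k}\to v$ in $C^{1,\alpha}(\overline\Omega)$ and $\lambda^{\epsilon_k}\to\lambda\in\mathbb{R}$. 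The PDE in \eqref{GSLE} depends only on $D^2 v$, so it passes to the limit (in the viscosity sense, then classically by regularity), and the boundary condition $v^\epsilon_\nu = -\epsilon v^\epsilon + \phi + \lambda^\epsilon$ passes to $v_\nu = \phi + \lambda$ since $\epsilon v^\epsilon\to 0$ uniformly. Smoothness of the limit $v$ then follows from Evans--Krylov combined with boundary Schauder theory, using the concavity of $F$ in the critical/supercritical regime $\Theta\geq(n-2)\pi/2$.

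For uniqueness, suppose $(u_i,\lambda_i)$, $i=1,2$, are two solutions. The difference $w=u_1-u_2$ satisfies a linear uniformly elliptic equation $a^{ij}w_{ij}=0$, with coefficients obtained by linearizing $F$ along the segment between $D^2 u_1$ and $D^2 u_2$, together with the Neumann data $w_\nu = \lambda_1-\lambda_2$ on $\partial\Omega$. If $\lambda_1\neq\lambda_2$, the Hopf boundary lemma applied at the interior maximum and minimum points of $w$ yields a contradiction, forcing $\lambda_1=\lambda_2$; the strong maximum principle then gives $w\equiv\mathrm{const}$.

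The main obstacle is the $\epsilon$-independent gradient estimate. Once the damping $-\epsilon u$ disappears in the limit, the Hopf-type constructions that typically use the damping term to dominate boundary oscillations are no longer effective, and the bound has to be extracted instead from the strict convexity of $\Omega$ together with the special algebraic features of the arctangent operator (in particular, its concavity for $\Theta\geq(n-2)\pi/2$ and the uniform positivity of $\sum_i(1+\lambda_i^2/f^2)^{-1}$). I expect this step to parallel the gradient estimates of Ma--Qiu \cite{MaQiu2019CMP} and Chen--Ma--Wei \cite{ChenMaWei2019JDE}, now adapted to the mixed Hessian/arctan structure of \eqref{GSLE}.
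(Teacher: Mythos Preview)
Your proposal is correct and matches the paper's approach essentially line for line: the paper also solves the $\epsilon$-damped problem \eqref{aslte11} via Theorem \ref{sltethm1}, invokes the $\epsilon$-independent gradient bound (this is precisely Lemma \ref{UniformGradient}, whose proof exploits the strict convexity of $\Omega$ and the algebra of the arctangent operator in the critical/supercritical range, as you anticipate), uses $|\epsilon u^\epsilon|\le C$ to extract a limiting constant $\lambda$, and passes to the limit. One small wording slip in your uniqueness paragraph: the Hopf lemma is applied at the \emph{boundary} extremal points of $w$ (which exist by the strong maximum principle), not at interior ones; otherwise your argument is the paper's.
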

To solve the above classical problem, we consider the following approximating equation.
\begin{align}\label{aslte11}
\left\{
\begin{aligned}
    \sum_{i=1}^n \arctan \frac{\lambda_i (D^2 u^{\varepsilon})}{f(x)} = \Theta \quad in \quad \Omega, \\
    u^\varepsilon_\nu =  -\varepsilon u^\varepsilon+\phi(x)\quad on \quad \partial \Omega.
    \end{aligned}
    \right.
\end{align}
We will prove $|Du^\varepsilon|$ and $|D^2u^\varepsilon|$ have uniform bounds which are independent of $\varepsilon$.

{\bf{Notations}} In this paper, $C$ is a uniformly positive constant depending only on $n, \Omega, |f|_{C^2}$, $|f^{-1}|_{L^\infty}, |\varphi|_{C^{2}}$. For two positive functions $g,h$, $g\sim h$ means there exists a positive uniform constant $C$ such that $C^{-1}h\le g\le Ch$.

\section{Preliminaries}
\subsection{Equations from differentiating the special Lagrangian type equation}

\begin{lemma}
Let $W=\{ W_{ij} \}$ is a $n\times n$ symmetric matrix and $\lambda(W)=(\lambda_1,\lambda_2, \cdots, \lambda_n)$ are eigenvalues of the symmetric matrix $W$. Suppose that $W$ is diagonal and $\lambda_i = W_{ii}$, then we have
\begin{align*}
    &\frac{\partial \lambda_i}{\partial W_{ii}} =1 , \quad \frac{\partial \lambda_k}{\partial W_{ij}} =0 \quad otherwise,\\
    &\frac{\partial^2 \lambda_i}{\partial W_{ij} \partial W_{ji}} =\frac{1}{\lambda_i -\lambda_j}, \quad i\neq j \quad and \quad \lambda_i \neq \lambda_j\\
    &\frac{\partial^2 \lambda_i}{\partial W_{kl} \partial W_{pq}}=0 \quad otherwise.
\end{align*}
\end{lemma}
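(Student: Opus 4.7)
The plan is to prove both formulas via first- and second-order perturbation theory for eigenvalues of symmetric matrices, applied at the diagonal base point $W_0 = W$ under the implicit hypothesis that the relevant eigenvalues are simple. Since the eigenvalues appearing in the denominators are assumed distinct, the implicit function theorem produces smooth local branches $\lambda_i(W)$ together with orthonormal eigenvectors $v_i(W)$, normalized so that $v_i(W_0) = e_i$.

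For the first derivatives, I would introduce a one-parameter family $W(t) = W_0 + tE$ for a constant matrix $E$ whose entries select the variable of differentiation, differentiate the eigenvalue identity $W(t) v_i(t) = \lambda_i(t) v_i(t)$ at $t=0$, and pair with $v_i(0) = e_i$. Using $v_i(t)^T v_i(t) \equiv 1$ to kill the contribution of $v_i'(0)$ immediately yields $\lambda_i'(0) = \langle e_i, E e_i\rangle = E_{ii}$. Under the convention that $W_{kl}$ and $W_{lk}$ are treated as formally independent coordinates, this gives $\partial \lambda_i / \partial W_{ii} = 1$ and $\partial \lambda_k / \partial W_{ij} = 0$ in every remaining case.

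For the second derivatives, I would extend the expansion to $v_i(t) = e_i + t w_i + O(t^2)$ and $\lambda_i(t) = \lambda_i + t E_{ii} + t^2 \beta_i + O(t^3)$, substitute into the eigenvalue equation, and compare powers of $t$. Projecting the order-$t$ equation onto $e_j$ for $j \neq i$ determines $\langle e_j, w_i\rangle = E_{ji}/(\lambda_i - \lambda_j)$; projecting the order-$t^2$ equation onto $e_i$ then produces $\beta_i = \sum_{j \neq i} E_{ij} E_{ji}/(\lambda_i - \lambda_j)$. Reading off this bilinear form as the Hessian of $\lambda_i$ in the entries of $W$, the only pairs of variables contributing are $(W_{ij}, W_{ji})$ with $j \neq i$, giving $\partial^2 \lambda_i / \partial W_{ij} \partial W_{ji} = 1/(\lambda_i - \lambda_j)$ and vanishing for every other second partial.

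The main point to watch — more a bookkeeping trap than a real obstacle — is the convention on off-diagonal entries: the clean constant $1/(\lambda_i - \lambda_j)$ in the formula relies on treating $W_{ij}$ and $W_{ji}$ as independent formal variables, rather than working in upper-triangular coordinates where a factor of $2$ would appear. An alternative route via implicit differentiation of the characteristic polynomial $\det(W - \lambda I) = 0$ together with Jacobi's formula would also work, but the perturbation-theoretic expansion handles both orders of derivatives in a single uniform computation.
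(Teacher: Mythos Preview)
Your perturbation-theoretic argument is correct and complete. The paper, however, does not prove this lemma at all: it is stated in the preliminaries as a standard identity and then immediately used to compute $F^{ij}$ and $F^{ij,kl}$ for the special Lagrangian type operator. So there is no ``paper's proof'' to compare against; you have supplied a proof where the authors simply cited the result as known.

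One minor bookkeeping point worth making explicit in your write-up: when you pass from the coefficient $\beta_i$ in the expansion $\lambda_i(t)=\lambda_i+tE_{ii}+t^2\beta_i+O(t^3)$ to the Hessian entries, there are two factors of $2$ that cancel---one from $\lambda_i''(0)=2\beta_i$ and one from the symmetry $\partial^2\lambda_i/\partial W_{ij}\partial W_{ji}=\partial^2\lambda_i/\partial W_{ji}\partial W_{ij}$ appearing twice in the bilinear form. You clearly have the right answer and flag the convention issue, but spelling out this cancellation would preempt any reader confusion.
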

Differentiating the equation \eqref{slte1}, 
\begin{align*}
&\sum_{i,j=1}^nF^{ij}u_{ijp}+F_{x_p}=0,\\
&\sum_{i,j=1}^nF^{ij}u_{ijpq}+\sum_{i,j,k,l=1}^nF^{ij,kl}u_{ijp}u_{klq}+\sum_{i,j=1}^nF^{ij}_{x_q}u_{ijp}+\sum_{i,j=1}^nF^{ij}_{x_p}u_{ijq}+F_{x_p,x_q}=0.
\end{align*}

If $D^2 u(x_0)=\{\lambda_i\delta_{ij}\}$ is diagonal, we have
\begin{align*}
F^{ij}=& \frac{f}{f^2+\lambda_i^2}\delta_{ij},\\
F^{ij}_{x_p}=&-\frac{f_p\delta_{ij}}{f^2+\lambda_i^2}+\frac{2\lambda_i^2f_p\delta_{ij}}{(f^2+\lambda_i^2)^2},\\
F_{x_p}=&-\sum_{i=1}^n\frac{\lambda_i}{f^2+\lambda_i^2}f_p,\\
F_{x_p,x_q}=&\sum_{i=1}^n\frac{\lambda_i}{f^2+\lambda_i^2}\Big(-\frac{2\lambda_i^2f_pf_q}{f(f^2+\lambda_i^2)}+\frac{2f_pf_q}{f}-f_{pq}\Big),
\end{align*}

\begin{align*}
F^{ij,kl}=& \sum_p -\frac{2f\lambda_p}{(f^2 + \lambda^2_p)^2} \frac{\partial \lambda_p}{\partial u_{kl}} \frac{\partial \lambda_p}{\partial u_{ij}}+ \frac{f}{f^2+\lambda^2_p} \frac{\partial^2 \lambda_p}{\partial u_{ij} \partial u_{kl}} .
\end{align*}
If $i=j=k=l$, 
\begin{equation*}
    F^{ij,kl} = -\frac{2f\lambda_i}{(f^2+\lambda^2_i)^2}.
\end{equation*}
If $i=l, k=j$ and $i\neq j$, 
\begin{align*}
    F^{ij,kl} = &\frac{f}{(f^2+\lambda^2_i) (\lambda_i -\lambda_j)} + \frac{f}{(f^2+\lambda^2_j) (\lambda_j -\lambda_i)}\\
    =& \frac{f(\lambda^2_j-\lambda^2_i)}{(f^2+\lambda^2_i)(f^2+\lambda^2_j)(\lambda_i -\lambda_j)}\\
    =& -\frac{f(\lambda_i+\lambda_j)}{(f^2+\lambda^2_i)(f^2+\lambda^2_j)}.
\end{align*}
Thus we have
\begin{align*}
F^{ij,kl}=\left\{
\begin{aligned}
&-\frac{f(\lambda_i+\lambda_j)}{(f^2+\lambda_i^2)(f^2+\lambda_j^2)},& i=l, k=j,\\
&0,\qquad & \text{otherwise}.
\end{aligned}
\right.
\end{align*}
Then we have
\begin{align*}%\label{1stderi}
\sum_{i,j=1}^nF^{ij}u_{ijp}=\sum_{i=1}^n\frac{f}{f^2+\lambda_i^2}u_{iip}=\sum_{i=1}^n\frac{\lambda_i}{f^2+\lambda_i^2}f_p,
\end{align*}
and
\begin{align*}%\label{2edderi}
\sum_{i,j=1}^nF^{ij}u_{ijpp}=&\frac{1}{f}\sum_{i=1}^n\frac{2\lambda_i}{(f^2+\lambda_i^2)^2}(fu_{iip}-\lambda_if_p)^2+\sum_{i\neq j}\frac{f(\lambda_i+\lambda_j)}{(f^2+\lambda_i^2)(f^2+\lambda_j^2)}u_{ijp}^2
\notag\\
&+2f_p\sum_{i=1}^n\frac{u_{iip}}{f^2+\lambda_i^2}+\sum_{i=1}^n\frac{\lambda_i}{f^2+\lambda_i^2}\Big(f_{pp}-\frac{2f_p^2}{f}\Big).
\end{align*}

In conclusion, we have
\begin{align}
\sum_{i,j=1}^nF^{ij}u_{ijp}=&\sum_{i=1}^n\frac{\lambda_i}{f^2+\lambda_i^2}f_p \quad \text{i.e.} \quad \sum_{i=1}^n\frac{fu_{iip}-\lambda_i f_p}{f^2+\lambda_i^2}=0\label{diff1time} \\
\sum_{i,j=1}^n F^{ij}u_{ijpp}=&\frac{1}{f}\sum_{i=1}^n\frac{2\lambda_i}{(f^2+\lambda_i^2)^2}(fu_{iip}-\lambda_if_p)^2+\sum_{i\neq j}\frac{f(\lambda_i+\lambda_j)}{(f^2+\lambda_i^2)(f^2+\lambda_j^2)}u_{ijp}^2
\notag\\
&+\sum_{i=1}^n\frac{\lambda_i}{f^2+\lambda_i^2}f_{pp}\label{diff2times}.
\end{align}
For special Lagrangian type equations, these properties are well-known and can be found in \cite{WarrenYuan2010AJM,WY11}.
\begin{lemma} \label{WWY}
Suppose that $+\infty>\lambda_1 \geq \lambda_2 \geq \cdots \geq \lambda_n$ satisfy $\sum\limits_{i} \arctan \frac{\lambda_i}{f}=\Theta \geq (n-2)\frac{\pi}{2}$ and $f>0$. The following properties hold
\begin{enumerate}[(1)]
    \item $\lambda_1 \geq \lambda_2 \geq \cdots \geq\lambda_{n-1} >0$, $|\lambda_n|\leq \lambda_{n-1}$.
    \item \label{1lambda}if $\lambda_n < 0$, then $\sum^n_{i=1} \frac{1}{\lambda_i}\leq 0$.
    \item \label{deltalambda} If $\sum\limits_{i} \arctan \frac{\lambda_i}{f} \geq (n-2)\frac{\pi}{2} + \delta$, then $\lambda_n \geq -C(\delta)\max |f|$.
\end{enumerate}
 
\end{lemma}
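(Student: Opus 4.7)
The plan is to change variables via $\theta_i := \arctan(\lambda_i/f) \in (-\pi/2, \pi/2)$, so that $\sum_i \theta_i = \Theta$, $\lambda_i = f\tan\theta_i$, and $1/\lambda_i = (\cot\theta_i)/f$. All three assertions then reduce to elementary trigonometric statements in the $\theta_i$'s.

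For (1), I would first argue by contradiction that $\theta_{n-1} > 0$: if $\theta_{n-1} \leq 0$, then $\theta_n \leq \theta_{n-1} \leq 0$, and the remaining $n-2$ terms each satisfy $\theta_i < \pi/2$, giving $\sum_i \theta_i < (n-2)\pi/2$, which contradicts $\Theta \geq (n-2)\pi/2$. This gives $\lambda_1 \geq \cdots \geq \lambda_{n-1} > 0$. Since $\sum_{i \leq n-2} \theta_i < (n-2)\pi/2 \leq \Theta$, we also get $\theta_{n-1} + \theta_n > 0$, so by monotonicity of $\tan$ on $(-\pi/2, \pi/2)$ (together with $f > 0$) $\lambda_{n-1} + \lambda_n > 0$, i.e.\ $|\lambda_n| \leq \lambda_{n-1}$. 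For (3) the same accounting gives $\theta_n \geq \Theta - (n-1)\pi/2 \geq -\pi/2 + \delta$, hence $\lambda_n \geq f\tan(-\pi/2 + \delta) = -f\cot\delta \geq -(\cot\delta)\max f$.

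The main work is in (2). Assuming $\lambda_n < 0$, set $\psi := -\theta_n$ and $\phi_i := \pi/2 - \theta_i$ for $i \leq n-1$; by (1) all of these lie in $(0, \pi/2)$. The phase bound $\sum \theta_i \geq (n-2)\pi/2$ rearranges to $\sum_{i=1}^{n-1}\phi_i + \psi \leq \pi/2$. Using $\cot(\pi/2 - \phi) = \tan\phi$ and $\cot(-\psi) = -\cot\psi$,
\[
f\sum_{i=1}^n \frac{1}{\lambda_i} \;=\; \sum_{i=1}^{n-1}\tan\phi_i \;-\; \cot\psi.
\]
The crux, and the step I expect to be the main obstacle, is the tangent subadditivity inequality
\[
\sum_{j=1}^m \tan\alpha_j \;\leq\; \tan\Bigl(\sum_{j=1}^m \alpha_j\Bigr), \qquad \alpha_j \in (0,\pi/2),\ \textstyle\sum \alpha_j < \pi/2,
\]
which I would prove by induction on $m$ starting from the identity $\tan(\alpha_1 + \alpha_2) - \tan\alpha_1 - \tan\alpha_2 = \tan\alpha_1\tan\alpha_2\tan(\alpha_1 + \alpha_2)$; the right side is nonnegative because $\alpha_1 + \alpha_2 < \pi/2$ forces $\tan\alpha_1\tan\alpha_2 < 1$ and $\tan(\alpha_1 + \alpha_2) > 0$. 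Applying this to $\phi_1,\ldots,\phi_{n-1}$ and using $\sum\phi_i \leq \pi/2 - \psi$ yields $\sum_{i=1}^{n-1}\tan\phi_i \leq \tan(\pi/2 - \psi) = \cot\psi$, so $f\sum 1/\lambda_i \leq 0$, proving (2).
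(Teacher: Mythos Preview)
Your proposal is correct and follows essentially the same route as the paper: the same change of variables $\theta_i=\arctan(\lambda_i/f)$, the same angle-counting for parts (1) and (3), and for part (2) the same reduction to the tangent subadditivity inequality $\sum\tan\phi_i\le\tan\bigl(\sum\phi_i\bigr)$ on $(0,\pi/2)$, which the paper proves by iterating the addition formula just as you do. Your relabeling $\psi=-\theta_n$, $\phi_i=\pi/2-\theta_i$ and the explicit identity $\tan(\alpha_1+\alpha_2)-\tan\alpha_1-\tan\alpha_2=\tan\alpha_1\tan\alpha_2\tan(\alpha_1+\alpha_2)$ make the presentation slightly cleaner, but the argument is the same.
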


\begin{proof}
Let us denote 
\begin{equation*}
    \theta_i =\arctan \frac{\lambda_i}{f}.
\end{equation*}
Thus our equation \eqref{slte1} is 
\begin{equation*}
    \sum\limits_i \theta_i =\Theta \geq \frac{(n-2)\pi}{2}.
\end{equation*}
We assume that $\theta_1 \geq \theta_2 \geq \cdots \theta_{n-1} \geq \theta_n$. Thus it is not hard to see that 
\begin{equation*}
    \theta_{n-1} + \theta_n \geq 0.
\end{equation*}
Otherwise, we would have $ \sum\limits_i \theta_i < \frac{(n-2)\pi}{2}$, which contradicts our equation \eqref{slte1}.
Thus there are at least $n-1$ finite positive eigenvalues, say $\lambda_1 \geq \lambda_2 \geq \cdots \geq\lambda_{n-1} >0 $ and $|\lambda_n|\leq \lambda_{n-1}$. 
When $\Theta\geq \frac{(n-2)\pi}{2}$ and $\lambda_n<0$, we have 
\begin{equation*}
    \frac{\pi}{2} > \frac{\pi}{2} +\theta_n \geq  \sum\limits^{n-1}_{i=1}(\frac{\pi}{2} -\theta_i)>0.
\end{equation*}
By an elementary identity for $\tan$ function, we have 
\begin{eqnarray*}
    \tan \sum\limits^{n-1}_{i=1}(\frac{\pi}{2} -\theta_i) &=&\frac{\tan (\frac{\pi}{2}-\theta_1) + \tan \sum\limits^{n-1}_{i=2}(\frac{\pi}{2} -\theta_i )}{1-\tan (\frac{\pi}{2}-\theta_1) \tan \sum\limits^{n-1}_{i=2}(\frac{\pi}{2} -\theta_i )}\\
    &\geq& \tan (\frac{\pi}{2}-\theta_1) + \tan \sum^{n-1}_{i=2}(\frac{\pi}{2} -\theta_i )\\
    & \geq& \vdots\\
    &\geq& \sum\limits^{n-1}_{i=1} \tan(\frac{\pi}{2}-\theta_i).
\end{eqnarray*}
    
Thus 
\begin{equation*}
    -\frac{f}{\lambda_n}=\tan (\frac{\pi}{2} + \theta_n)  \geq \tan  \sum\limits^{n-1}_{i=1}(\frac{\pi}{2} -\theta_i)\geq \sum\limits^{n-1}_{i=1} \tan(\frac{\pi}{2}-\theta_i) = \sum\limits^{n-1}_{i=1} \frac{f}{\lambda_i}.
\end{equation*}
So we have 
\begin{equation*}
    \sum^n_{i=1}\frac{f}{\lambda_i}\le 0.
\end{equation*}   
As $f > 0$, the inequality above is equivalent to
\begin{equation*}
\sum^n_{i=1}\frac{1}{\lambda_i} \le 0.
\end{equation*}
Suppose  $\sum\limits_{i} \arctan \frac{\lambda_i}{f} \geq (n-2)\frac{\pi}{2} + \delta$, we should have 
\begin{equation*}
    \theta_n \geq -\frac{\pi}{2} + \delta.
\end{equation*}
Thus we obtain
\begin{equation*}
    \lambda_n \geq-C(\delta)\max |f|.
\end{equation*}
 
\end{proof}

The following lemma will be used to derive the global second order derivative estimate.
\begin{lemma}\label{deri12}
Let $u$ be a solution of the special Lagrangian type equation \eqref{slte1}. Assume $\{D^2u(x_0)\}=\{\lambda_i\delta_{ij}\}$ with $\lambda_1\ge \lambda_2\ge \cdots \geq \lambda_n$, then we have
\begin{align}
&\Big|\sum_{i,j=1}^n F^{ij}u_{ijp}\Big|\le |Df|\big| \sum_{i=1}^n\frac{\lambda_i}{f^2+\lambda_i^2}\big|,\label{deri1}\\
&\sum_{i,j=1}^nF^{ij}u_{ijpp}\ge -|D^2f|\big| \sum_{i=1}^n\frac{\lambda_i}{f^2+\lambda_i^2}\big|.\label{deri2}
\end{align}
\end{lemma}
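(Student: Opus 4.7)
The first bound \eqref{deri1} is immediate: the identity \eqref{diff1time} reads $\sum_{i,j} F^{ij} u_{ijp} = f_p \sum_{i} \tfrac{\lambda_i}{f^2+\lambda_i^2}$, so taking absolute values and using $|f_p|\le |Df|$ gives the claimed inequality.

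The bulk of the work is the second inequality \eqref{deri2}. I start from the identity \eqref{diff2times}, which expresses $\sum_{i,j} F^{ij} u_{ijpp}$ as a sum of three terms. The last term, $\sum_i \tfrac{\lambda_i}{f^2+\lambda_i^2}f_{pp}$, is bounded from below by $-|D^2f|\,\bigl|\sum_i \tfrac{\lambda_i}{f^2+\lambda_i^2}\bigr|$. The plan is to show that the remaining two terms are each nonnegative, so they drop out of the lower bound. For the cross term $T_2 = \sum_{i\neq j} \tfrac{f(\lambda_i+\lambda_j)}{(f^2+\lambda_i^2)(f^2+\lambda_j^2)} u_{ijp}^2$, this is a direct consequence of Lemma \ref{WWY}(1): since $\lambda_1,\dots,\lambda_{n-1}>0$ and $|\lambda_n|\le \lambda_{n-1}$, every pair satisfies $\lambda_i+\lambda_j\ge 0$, hence $T_2\ge 0$.

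The essential point is the nonnegativity of the diagonal term $T_1 = \tfrac{1}{f}\sum_i \tfrac{2\lambda_i}{(f^2+\lambda_i^2)^2}(fu_{iip}-\lambda_i f_p)^2$. Introducing $b_i := \tfrac{fu_{iip}-\lambda_i f_p}{f^2+\lambda_i^2}$, the identity \eqref{diff1time} becomes the linear constraint $\sum_i b_i = 0$, and $T_1 = \tfrac{2}{f}\sum_i \lambda_i b_i^2$. If $\lambda_n\ge 0$ the conclusion $T_1\ge 0$ is immediate. Otherwise I use $b_n = -\sum_{i<n} b_i$ together with Cauchy--Schwarz in the form $b_n^2 \le \bigl(\sum_{i<n}\lambda_i b_i^2\bigr)\bigl(\sum_{i<n}\tfrac{1}{\lambda_i}\bigr)$ (valid since $\lambda_1,\dots,\lambda_{n-1}>0$). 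Substituting yields
\begin{equation*}
\sum_i \lambda_i b_i^2 \;\ge\; \Bigl(\sum_{i<n}\lambda_i b_i^2\Bigr)\Bigl(1 + \lambda_n \sum_{i<n}\tfrac{1}{\lambda_i}\Bigr),
\end{equation*}
and the second factor equals $\lambda_n \sum_i \tfrac{1}{\lambda_i}$, which is nonnegative exactly by Lemma \ref{WWY}(2) (product of two nonpositive quantities). Hence $T_1\ge 0$, finishing the proof.

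The main obstacle is the diagonal term $T_1$, which is \emph{not} manifestly nonnegative when $\lambda_n<0$; the argument relies crucially on the algebraic fact $\sum_i \tfrac{1}{\lambda_i}\le 0$ from Lemma \ref{WWY}(2), which encodes the supercritical/critical phase assumption. This is the reason why the lemma holds for $\Theta\ge (n-2)\pi/2$ and would fail without such a phase condition.
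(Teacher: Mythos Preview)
Your proof is correct and follows essentially the same route as the paper: both reduce \eqref{deri2} via \eqref{diff2times} to the nonnegativity of $\sum_i \lambda_i b_i^2$ under the constraint $\sum_i b_i=0$, dispose of the cross term using $\lambda_i+\lambda_j\ge 0$, and handle the case $\lambda_n<0$ by Cauchy--Schwarz with weights $\lambda_1,\dots,\lambda_{n-1}$ combined with Lemma~\ref{WWY}\ref{1lambda}. The only cosmetic difference is that you bound $b_n^2$ from above whereas the paper bounds $\sum_{i<n}\lambda_i b_i^2$ from below, which are two sides of the same inequality.
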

\begin{proof}
The inequality \eqref{deri1} follows from \eqref{diff1time}.

To prove \eqref{deri2}, by \eqref{diff2times} and $\lambda_i+\lambda_j\ge 0, \ \forall\  i\neq j$ , we get
\begin{eqnarray*}
\sum_{i,j=1}^nF^{ij}u_{ijpp}\ge& \sum_{i=1}^n\frac{\lambda_i}{(f^2+\lambda_i^2)^2}(fu_{iip}
-\lambda_if_p)^2-|D^2f|\big|\sum\limits_{i=1}^n\frac{\lambda_i}{f^2+\lambda_i^2}\big|.
\end{eqnarray*}
Then we only need to prove $$\mathcal{I}:=\sum_{i=1}^n\frac{\lambda_i}{(f^2+\lambda_i^2)^2}(fu_{iip}-\lambda_if_p)^2\ge 0.$$
This is obvious if $\lambda_n\ge 0$. 
Now we assume $\lambda_n<0$.  Set  $x_i=\frac{fu_{iip}-\lambda_i f_p}{f^2+\lambda_i^2}$, then  $x_n=-\sum_{i=1}^{n-1}x_i$. Since $\lambda_i>0, 1\le \lambda_i\le n-1$, by Cauchy inequality, we get
\begin{align*}
\mathcal{I}=\ &\sum_{i=1}^{n-1}\lambda_i x_i^2+\lambda_n(\sum_{i=1}^{n-1}x_i)^2\\
\ge\ & \frac{(\sum\limits_{i=1}^{n-1}x_i)^2}{\sum\limits_{i=1}^{n-1}{\lambda_i^{-1}}}+\lambda_n(\sum_{i=1}^{n-1}x_i)^2\\
=\ &\frac{(\sum\limits_{i=1}^{n-1}x_i)^2}{\sum\limits_{i=1}^{n-1}{\lambda_i^{-1}}}\lambda_n\sum\limits_{i=1}^{n}\lambda_i^{-1}\\
\ge\ & 0,
\end{align*}
where the last inequality is a consequence of \ref{1lambda} from Lemma \ref{WWY}.

\end{proof}

For the boundary estimates in section 3 and 4, we define
\begin{equation*}
\Omega_{\mu}={x\in\Omega: d(x,\partial\Omega)<\mu},
\end{equation*}
and let
\begin{equation}\label{}
h(x) = -d(x) + d^2(x).
\end{equation}
It is known from the classic book \cite{GT} section 14.6 that $h$ is $C^4$ in $\Omega_{\mu}$
for some constant $\mu \leq \widetilde{\mu}$, where $\widetilde{\mu}$ depends on $\Omega$.
In terms of a principal coordinate system, see \cite{GT}  section 14.6, for any $x_0\in\Omega_{\mu}$, there exists a unique point $y_0\in\partial\Omega$ such that 
\begin{equation*}
  \{-D^2d(x_0)\} =\mathrm{diag}\{\frac{\kappa_1(y_0)}{1-\kappa_1(y_0) d(x_0)}, \cdots , \frac{\kappa_{n-1}(y_0)}{1- \kappa_{n-1}(y_0)d(x_0)}, 0\},
\end{equation*}
and
\begin{equation}\label{}
  -Dd(x_0) = \nu(y_0) = (0,0,\cdots,1),
\end{equation}
where  $\nu$ is the unit outward normal on the boundary $\partial \Omega$.
Then $h$  satisfies the following properties in $\Omega_{\mu}$:
\begin{align*}
   -\mu + \mu^2 \leq h \leq& 0,\\
  \frac{1}{2} \leq |D h| \leq 2,\\
  \kappa_0I \leq D^2 h 
  \leq& K_0 I,\\
\sum\limits_{i,j}F^{ij} h_{ij} \geq& k_0 \mathcal{F},
\end{align*}
provided $\mu \leq \widetilde{\mu}$ small depend on $||\partial \Omega||_{C^2}$. Here $\kappa_0$ and $K_0$ are positive constants depending on $\kappa : = (\kappa_1, \cdots, \kappa_{n-1})$.
It is easy to see
\begin{equation}\label{}
  {Dh} = \nu\quad\text{on} \quad\partial\Omega.
\end{equation}
.

\section{$C^0$ and $C^1$-estimates}

In this section, we prove the $C^0$ estimate and gradient estimate for the Neumann problem \eqref{GSLE}.
 The gradient estimate contains 
 interior gradient estimates  and the  near boundary gradient estimates for equation \eqref{GSLE}.
We also prove the uniform gradient estimate for the classical problem by assuming the strict convexity of the domain.

\subsection{$C^0$-estimate}

\begin{theorem}
$(1)$
Let 
$u$ be a $C^2$ solution of problem \eqref{GSLE} with $-\varphi_u\ge c_0>0$.
We have
\begin{equation}
\max_{\overline \Omega} |u|\le C.\label{C^0}
\end{equation}
$(2)$ 
Let $u$ be a $C^2$ solution of problem \eqref{aslte11}. We have
\begin{equation}
\max_{\overline \Omega} |\varepsilon u^\varepsilon|\le C. \label{epsilonU} 
\end{equation}
\end{theorem}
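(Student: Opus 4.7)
The plan is to prove the two-sided $C^0$ bound by a max/min analysis: the maximum is forced to the boundary by Lemma \ref{WWY}, while the minimum, which can a priori occur in the interior, must be controlled by a quadratic subsolution barrier.

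For the upper bound, I would note that Lemma \ref{WWY}(1) gives $\lambda_1\ge\cdots\ge\lambda_{n-1}>0$ at every point, so a putative interior maximum would contradict $D^2u\le 0$ there. Hence the maximum is attained at some $x_M\in\partial\Omega$, where Hopf's lemma (applicable because the linearized coefficients $F^{ij}=\tfrac{f}{f^2+\lambda_i^2}\delta_{ij}$ are strictly positive at a $C^2$ point) gives $u_\nu(x_M)\ge 0$. Inserting the Neumann condition and the coercivity $-\varphi_u\ge c_0>0$ yields $c_0\, u(x_M)\le \varphi(x_M,0)$, hence $\max u\le C/c_0$. The same argument applied to \eqref{aslte11}, where $-\varphi_u=\varepsilon>0$, delivers $\varepsilon u^\varepsilon\le \|\phi\|_{L^\infty}$ and thus the upper bound in \eqref{epsilonU}.

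The lower bound is the harder direction, since an interior minimum is \emph{not} excluded by Lemma \ref{WWY} (all eigenvalues non-negative is consistent). I plan to exhibit a strict Neumann sub-solution and invoke comparison. A natural candidate is $\underline u(x)=\tfrac{a}{2}|x|^2-K$: if $a>\|f\|_{L^\infty}\tan(\Theta/n)$, then $F[\underline u]=n\arctan(a/f)>\Theta$ strictly (using $\Theta<n\pi/2$), and if $K$ is large enough (depending on $a$, $\operatorname{diam}(\Omega)$, $\|\phi\|_{L^\infty}$) then $\underline u_\nu+\underline u\le \phi$ on $\partial\Omega$, i.e.\ $\underline u$ satisfies the Neumann condition in the sub-solution sense. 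Setting $w=\underline u-u$: an interior maximum of $w$ is ruled out since ellipticity of $F$ at that point would force $F[\underline u]\le F[u]=\Theta$, contradicting the strict inequality; at a boundary maximum $x_0$, Hopf gives $w_\nu(x_0)\ge 0$, so
\[
\varphi(x_0,\underline u(x_0))\;\ge\;\underline u_\nu(x_0)\;\ge\;u_\nu(x_0)\;=\;\varphi(x_0,u(x_0)),
\]
and the strict monotonicity $-\varphi_u\ge c_0>0$ forces $\underline u(x_0)\le u(x_0)$. Hence $w\le 0$ throughout $\overline\Omega$, i.e.\ $u\ge\underline u\ge -K$.

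For part (2), the identical $\underline u$ satisfies the sub-solution Neumann inequality $\underline u_\nu\le -\varepsilon\underline u+\phi$ provided $\varepsilon K\ge a\,x\cdot\nu+\varepsilon\tfrac{a}{2}|x|^2-\phi$, which forces $K=O(1/\varepsilon)$; the same comparison then yields $u^\varepsilon\ge -C/\varepsilon$, i.e.\ $\varepsilon u^\varepsilon\ge -C$ uniformly in $\varepsilon$. The main subtlety throughout is justifying the comparison step: one uses that along the homotopy between $D^2\underline u$ and $D^2u$, the linearized operator $A^{ij}=\int_0^1 F^{ij}\,dt$ is uniformly elliptic (since both Hessians are bounded at a $C^2$ solution), so that Hopf's lemma and the interior maximum-principle argument apply to $w=\underline u-u$.
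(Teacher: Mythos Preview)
Your proposal is correct and follows essentially the same route as the paper: push the maximum to $\partial\Omega$ using Lemma \ref{WWY} (the paper phrases this as ``$u$ is subharmonic''), then read off the bound from $u_\nu\ge 0$ and the coercivity $-\varphi_u\ge c_0$; for the lower bound, compare with a strict supersolution $\frac{a}{2}|x|^2$ to force the minimum of the difference to $\partial\Omega$ and conclude from $(u-v)_\nu\le 0$. The only cosmetic difference is that the paper does not shift the quadratic by $-K$ but instead extracts the bound directly at the boundary minimum via the mean value theorem on $\varphi$; your version with the pre-arranged sub-Neumann inequality $\underline u_\nu\le\varphi(x,\underline u)$ is equivalent (and note that invoking Hopf is unnecessary---$u_\nu\ge 0$ at a boundary maximum is immediate from the definition).
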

\begin{proof}
 Since $u$ is subharmonic,  $u$ attains its maximum at $x_0\in \partial \Omega$. We assume $u(x_0)>0$ otherwise $u$ has an upper bound. Then we have
\begin{align*}
0\le u_{\nu}=&\varphi(x_0,u(x_0))-\varphi(x_0,0)+\varphi(x_0,0)\\
=&u\varphi_u(x_0,tu(x_0))+\varphi(x_0,0)\\
\le& -c_0 u+\varphi(x_0,0).
\end{align*}
This gives the uniform upper bound of $u$.

Next, we prove the lower bound. 
Let $v=C_0\frac{|x|^2}{2}$ with $C_0=|f|_{C^0}(\tan\frac{\Theta}{n}+1)$. We have
\begin{align*}
F(D^2v,x)=n\arctan\frac{C_0}{f}>\Theta=F(D^2u, x).
\end{align*}
By the maximum principle, $u-v$ attains its minimum on $x_1\in \partial \Omega$. We assume $u(x_1)<0$ otherwise $\min u\ge -v(x_1)+\max v$. Then we have 
\begin{align*}
0\ge u_{\nu}(x_1)-v_{\nu}(x_1)=&
\varphi(x_1,u(x_1))-\varphi(x_1,0)+\varphi(x_1,0)-v_{\nu}(x_1)\\
=&\varphi_u(x_1,t_1u(x_1))u(x_1)+\varphi(x_1,0)-v_{\nu}(x_1)\\
\ge& -c_0 u(x_1)+\varphi(x_1,0)-v_{\nu}(x_1).
\end{align*}
Then we have $u(x_1)\geq -C$ and $u(x)\ge v(x)+ u(x_1)-v(x_1)\ge -C$.
The proof of \eqref{epsilonU} is similar to that of \eqref{C^0}, so we omit it.
\end{proof}

\subsection{The gradient estimate}
In this subsection, We will prove the interior gradient estimate and the boundary gradient estimate. One can see the gradient estimates for k-Hessian curvature equations with prescribed contact angle by Deng-Ma \cite{DM2023ME} and k-Hessians equation with oblique boundary condition by Wang \cite{Wang2022ANS}.

When $\Theta\in[(n-2)\frac{\pi}{2},n\frac{\pi}{2})$ is a constant and $f=1$, the interior gradient estimate was proved by Warren-Yuan \cite{WarrenYuan2010AJM}. When $f=1$ and $\Theta(x)\in[(n-2)\frac{\pi}{2},n\frac{\pi}{2})$, it was proved by Bhattacharya-Mooney-Shanker\cite{bhattacharya2022gradient}. For the special Lagrangian type equation, we will show the following interior gradient estimate hold.
\begin{theorem}
Let $u$ be a solution of the special Lagrangian type equation \eqref{slte1} in $B_1(0)$. Then there exists a positive constant $C$ such that
\begin{equation}
\sup_{B_{\frac{1}{2}}(0)} |Du|\le C(\sup_{B_1}u-\inf_{B_1}u+1)\log(\sup_{B_1}u-\inf_{B_1}u+1). \label{Interiorgradient}
\end{equation}
\end{theorem}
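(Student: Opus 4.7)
The plan is to construct a suitable auxiliary test function on $B_1$ and extract the gradient bound from the interior maximum principle applied to the linearized operator $F^{ij}\partial_{ij}$, using the derivative identities \eqref{diff1time}--\eqref{diff2times} and the algebraic properties of Lemma \ref{WWY}. Set $M := \sup_{B_1}u - \inf_{B_1}u$; after subtracting a constant, we may assume $0\le u\le M$ on $B_1$. A natural candidate is
\[
 G(x) \;=\; \eta(x)\,\log(1+|Du|^2)\,e^{A u(x)},
\]
with $\eta(x)=(1-|x|^2)^2$ a cutoff vanishing on $\partial B_1$, and $A>0$ a parameter to be chosen at the end in terms of $M$.

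Suppose $G$ attains its maximum at an interior point $x_0\in B_1$ (otherwise the estimate on $B_{1/2}$ is trivial). Rotate coordinates so that $D^2u(x_0)$ is diagonal with eigenvalues $\lambda_1\ge\cdots\ge\lambda_n$. The first-order condition $(\log G)_i(x_0)=0$ reads
\[
 \frac{\eta_i}{\eta} \;+\; \frac{2 u_i \lambda_i}{(1+|Du|^2)\log(1+|Du|^2)} \;+\; A\, u_i \;=\;0,
\]
and the second-order condition $F^{ij}(\log G)_{ij}(x_0)\le 0$ is the main inequality to exploit.

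Next, one expands $F^{ij}(\log G)_{ij}$, using \eqref{diff1time} to collapse the third-order derivatives $u_k F^{ij}u_{ijk}=-u_k F_{x_k}$ into a first-order quantity controlled by $|Du|\cdot |Df|\cdot\bigl|\sum_i\lambda_i/(f^2+\lambda_i^2)\bigr|$. The key positive contribution from $F^{ij}(|Du|^2)_{ij}$ is the quadratic term $\sum_i F^{ii}\lambda_i^2 = \sum_i f\lambda_i^2/(f^2+\lambda_i^2)$, which is bounded above by $nf$, so the gradient growth must be squeezed out of the remaining interactions. The cutoff derivatives $F^{ij}(\log\eta)_{ij}$ produce the bad term $-\eta^{-2}\sum_i F^{ii}\eta_i^2$, which is reduced via the first-order condition to a combination of terms involving $\lambda_i u_i$ and $A u_i$. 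The exponential weight contributes $A F^{ij}u_{ij} = A\sum_i f\lambda_i/(f^2+\lambda_i^2)$, which is handled using property \ref{1lambda} of Lemma \ref{WWY}: in the critical/supercritical regime, $\sum_i 1/\lambda_i\le 0$ gives the required one-sided control on this sum.

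Balancing all contributions and choosing $A\sim 1/(M+1)$ at the end yields $\log(1+|Du|^2)(x_0)\le C(M+1)$, hence
\[
 \sup_{B_{1/2}}|Du|\;\le\; C\,(M+1)\log(M+1),
\]
which is the desired estimate. The main obstacle is precisely this balancing step: because the positive quadratic term $\sum_i F^{ii}\lambda_i^2$ is a priori only bounded and does not grow with $|Du|$, the gradient estimate has to be extracted from the interaction between the cutoff derivatives, the exponential weight $e^{Au}$, and the algebraic identity $\sum_i 1/\lambda_i\le 0$ supplied by the critical/supercritical phase hypothesis. This is the technical heart of the argument and the place where $\Theta\ge (n-2)\pi/2$ enters in an essential way.
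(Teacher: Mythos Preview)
Your proposal has a genuine gap at precisely the step you yourself flag as the ``main obstacle''. Because you work with $\log G=\log\eta+\log\psi+Au$, the exponential weight contributes only the first-order term $A\sum_iF^{ii}\lambda_i$ at second order---there is no $A^2\sum_iF^{ii}u_i^2$ term. By Lemma~\ref{WY2010} the sum $\sum_iF^{ii}\lambda_i$ is merely nonnegative in the critical case, so it cannot dominate the bad cutoff contribution of order $\eta^{-2}\mathcal F$. You have already (correctly) noted that $\sum_iF^{ii}\lambda_i^2\le nf$ is bounded and therefore useless for coercivity; after that observation your test function has no term growing with $|Du|$, and the ``balancing'' you defer is not a routine technicality but the entire missing mechanism. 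There is a second, independent error: even granting the intermediate bound $\log(1+|Du|^2)(x_0)\le C(M+1)$ you claim, this gives $|Du|\lesssim e^{C(M+1)}$, not $M\log M$. With $A\sim 1/(M+1)$ the factor $e^{Au}$ is uniformly bounded, so this exponential loss cannot be recovered when transferring the bound from $x_0$ to $B_{1/2}$.

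The paper's proof is structurally different. It uses the \emph{additive} test function $G=|Du|\,\eta+g(u)$ with $g(u)=-A_0M\log(\sup_{B_1}u+1-u)$, where $\eta=\tfrac{1}{2}(1-|x|^2)$. The point of this choice is that $g''>0$ produces a genuine good term $g''\sum_iF^{ii}u_i^2$. The first-order condition $G_n=0$ (after diagonalizing, with $u_n\gtrsim|Du|$) directly forces $-\lambda_n\eta\sim g'u_n$; the phase hypothesis $\Theta\ge(n-2)\pi/2$ then gives $\lambda_i\ge|\lambda_n|$ for $i<n$, whence $\mathcal F\sim F^{nn}$ and $\sum_iF^{ii}u_i^2\ge F^{nn}u_n^2\gtrsim\mathcal F|Du|^2$. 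Balancing yields $|Du|\eta\le C(1+(g')^2)/g''\le CA_0M$, and the $\log M$ factor enters only at the very last step through $\max|g|\le A_0M\log(M+1)$. Both the source of coercivity and the origin of the logarithmic factor are thus completely different from what you sketched.
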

\begin{proof}
We consider
\begin{equation*}
G(x,\xi)=|Du|\eta+g(u),
\end{equation*}
where $\eta=\frac{1-|x|^2}{2}$ .
Assume $G$ attains its maximum at $x_0\in B_{1}$.  By rotating the coordinate, we assume $D^2u(x_0)$ is diagonal and denote $u_{ii}(x_0)$ by $\lambda_i$.\\
In the following, all the calculations are at $x_0$.
Firstly, we have 
\begin{align}\label{ig129}
0=G_i=&|Du|_{i}\eta+|Du|\eta_{i}+g'u_{i}\notag\\
=&\frac{u_{k}u_{ki}}{|Du|}\eta-|Du|x_{i}+g'u_{i}.
\end{align}
Without loss of generality, we assume $u_{n} \ge \frac{1}{\sqrt{n}}|Du| >0$. Then by \eqref{ig129} and choosing $g'>2n$, we get
\begin{align}\label{lamn129}
\frac{1}{2}g'u_{n}\le (-\lambda_n)\eta\le 2\sqrt{n} g'u_{n}.
\end{align}
Since $\Theta\ge (n-2)\frac{\pi}{2}$, we have  from Lemma \ref{WWY} that \begin{align*} %\label{lami129}
\lambda_i\ge |\lambda_n|\ge \frac{1}{2}g'u_{n}. \end{align*} 
Denote $\mathcal{F}:= \sum\limits^n_{i=1} F^{ii}$, we have
\begin{align}%\label{Fsum129}
\mathcal{F}\sim F^{nn}\sim|\lambda_n|^{-2}\sim u_{n}^{-2}\eta^2.
\end{align}
By the maximum principle, we have
\begin{align}\label{FG129}
0\ge \sum_{ij}F^{ij}G_{ij}=&\eta \sum_{i}F^{ii}|Du|_{ii}-2\sum_{i}F^{ii}\frac{\lambda_{i}u_{i}}{|Du|} x_i -|Du|\mathcal{F}\notag\\
&+g'\sum_{i}F^{ii}\lambda_i+g''\sum_{i}F^{ii}u_{i}^2
\end{align}
Firstly we estimate $\sum\limits_{i}F^{ii}|Du|_{ii}$ as follows:
\begin{align}\label{FiiDuii129}
\sum_{i}F^{ii}|Du|_{ii}=&\sum_{i,k}\frac{F^{ii}u_{iik}u_k}{|Du|}+\sum_{i}F^{ii}\frac{u_{ii}^2}{|Du|}-\sum_{i}F^{ii}\frac{u_{i}^2 u_{ii}^2}{|Du|^3}\notag\\
\ge& -\sum_{k}|\sum_{i}F^{ii}u_{iik}|\notag\\
\ge&-|D\log f|\sum_{i}F^{ii}|\lambda_i|,
\end{align}
where we use $\sum\limits_{i}F^{ii}u_{iik}=(\log f)_{k}\sum\limits_{i}F^{ii}\lambda_i$ in the last inequality.\\
The good term $\sum\limits_{i}F^{ii}u_i^2$ have the following estimate
\begin{align}\label{Fiiui2}
\sum_{i}F^{ii}u_i^2\ge F^{nn}u_{n}^2\ge \frac{1}{n^2}\mathcal{F}|Du|^2.
\end{align}
Inserting \eqref{FiiDuii129} and \eqref{Fiiui2} into \eqref{FG129}, we have
\begin{align*}
0\ge\sum_{ij}F^{ij}G_{ij}\ge& \frac{g''}{n^2}\mathcal{F}|Du|^2-|Du|\mathcal{F}-(g'+C)\sum_{i}F^{ii}|\lambda_{i}|\\
\ge&\frac{g''}{n^2}\mathcal{F}|Du|^2-|Du|\mathcal{F}-C(g'+1)\mathcal{F}^{\frac{1}{2}},
\end{align*}
where we use $\sum\limits_{i}F^{ii}|\lambda_i|\le (\sum\limits_{i}F^{ii}\lambda_i^2)^{\frac{1}{2}}(\sum\limits_{i}F^{ii})^{\frac{1}{2}}\le \sqrt{nf}\mathcal{F}^{\frac{1}{2}}$ in the last inequality.
\\
Then we get
\begin{align*}
\frac{g''}{n^2}|Du|^2-|Du|\le& C(g'+1)\mathcal{F}^{-\frac{1}{2}}\\
\le& C(g'+1)|\lambda_n|\\
\le& Cg'(g'+1)|Du|\eta^{-1},
\end{align*}
where in the last inequality we use \eqref{lamn129}.
This implies 
\begin{align}\label{Du129}
|Du|\eta\le C\frac{1+(g')^2}{g''}.
\end{align}
If we choose $g=-A_0M\log(\sup_{B_1} u+1- u)$ with $M=\sup_{B_1} u-\inf_{B_1} u+1$, by \eqref{Du129}, we obtain
\begin{align}
|Du|(x_0)\eta(x_0) \le CA_0 M.
\end{align}
Then for any $x_0\in B_{\frac{1}{2}}$, we have
\begin{align}
\frac{1}{4}|Du|(x)\le \eta(x)|Du|(x)\le \eta(x_0)|Du|(x_0)+A_0M\log M\le CM(1+\log M).
\end{align}
\end{proof}

Set $\Omega_{\mu}=\{x\in \Omega: d(x):=dist(x, \partial \Omega)<\mu\}$ with $\mu$ a small positive constant. By choosing $\mu$ small enough depending only on $|D\varphi|_{C^0}$, we have 
\begin{align}
1+\varphi_u d\in \Big(\frac{2}{3}, \frac{4}{3}\Big). 
\end{align}

\begin{lemma}
\label{FijlogDw}
Set $w=u+\varphi d$.
Assume $|Dw|\sim|Du|>2$, there exists uniform constant $C$ such that 
\begin{align}
\sum_{i,j}F^{ij}(\log|Dw|^2)_{ij}\ge& -C(d+|Du|^{-1}) 
\sum_{i}{F^{ii}}|\lambda_i|-C(d|Du|^2+|Du|)\mathcal{F}\notag\\
&-\frac{1}{2}\sum_{i,j}F^{ij}\frac{(|Dw|^2)_{i}(|Dw|^2)_{j}}{|Dw|^4}.
\end{align}
\end{lemma}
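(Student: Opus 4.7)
The plan is to differentiate $\log|Dw|^2$ twice, absorb half of the naturally appearing derivative--squared term using Cauchy--Schwarz against the nonnegative quadratic $\sum F^{ij}w_{ki}w_{kj}$, and then reduce the remaining third--order piece using the once--differentiated equation \eqref{diff1time} applied to the $u$--part of $w=u+\varphi d$.

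Concretely, at a point where $D^2u$ is diagonal one has $F^{ij}=F^{ii}\delta_{ij}>0$, and a direct computation gives
\begin{equation*}
\sum_{i,j}F^{ij}(\log|Dw|^2)_{ij}=\frac{2\sum_{i,k}F^{ii}w_{ki}^2}{|Dw|^2}+\frac{2\sum_k w_k\sum_i F^{ii}w_{kii}}{|Dw|^2}-\sum_i F^{ii}\frac{((|Dw|^2)_i)^2}{|Dw|^4}.
\end{equation*}
From $(|Dw|^2)_i=2\sum_k w_kw_{ki}$ and Cauchy--Schwarz one gets $((|Dw|^2)_i)^2\le 4|Dw|^2\sum_k w_{ki}^2$, so that half of the positive quadratic in $w_{ki}$ is enough to absorb half of the negative derivative--squared term, leaving
\begin{equation*}
\sum_{i,j}F^{ij}(\log|Dw|^2)_{ij}\ge\frac{2\sum_k w_k\sum_i F^{ii}w_{kii}}{|Dw|^2}-\frac{1}{2}\sum_{i,j}F^{ij}\frac{(|Dw|^2)_i(|Dw|^2)_j}{|Dw|^4},
\end{equation*}
so it remains to bound the middle third--order term from below.

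I would then split $w_{kii}=u_{kii}+(\varphi d)_{kii}$. For the $u_{kii}$ piece the once--differentiated equation \eqref{diff1time} yields $\sum_i F^{ii}u_{iik}=\frac{f_{x_k}}{f}\sum_i F^{ii}\lambda_i$, whose absolute value is at most $C\sum F^{ii}|\lambda_i|$; pairing with $w_k$ and dividing by $|Dw|^2\sim|Du|^2$ produces the $C|Du|^{-1}\sum F^{ii}|\lambda_i|$ contribution. For $(\varphi d)_{kii}$, expanding by the product rule (with $\varphi=\varphi(x,u(x))$), the only third--derivative term is $\varphi_u\,d\,u_{kii}$, controlled again by \eqref{diff1time} but with an extra factor of $d$ and so contributing $Cd\sum F^{ii}|\lambda_i|$. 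Every remaining term contains at most second derivatives of $u$ and is multiplied by $d$, $Dd$, $D^2d$ or $D^3d$; diagonality of $F^{ij}$ converts each $u_{kj}$ into $\lambda_k\delta_{kj}$, and the elementary estimates $\sum F^{ii}|u_i|\le|Du|\mathcal{F}$, $\sum F^{ii}u_i^2\le|Du|^2\mathcal{F}$ handle the rest: the linear $u_{ab}$ pieces give $O(|Du|^{-1})\sum F^{ii}|\lambda_i|$, the $\varphi_{uu}u_au_b$ pieces give $O(|Du|)\mathcal{F}$, and the cubic $\varphi_{uuu}u_iu_ju_k d$ piece produces the $Cd|Du|^2\mathcal{F}$ part.

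The main obstacle is the bookkeeping in this last step. The expansion of $(\varphi d)_{kii}$ generates many mixed terms of $d$, $Dd$, $D^2d$, $D^3d$ type, and one has to check that every term whose natural estimate would be worse than $|Du|^{-1}\sum F^{ii}|\lambda_i|$ or $|Du|\mathcal{F}$ carries a compensating factor of $d$. No sign information on the eigenvalues is needed at this stage; the only structural ingredient beyond direct differentiation is the identity \eqref{diff1time}, which is precisely what converts the a priori uncontrollable $\sum F^{ij}u_{ijk}$ into the tame quantity $\sum F^{ii}|\lambda_i|$.
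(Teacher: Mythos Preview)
Your proposal is correct and follows essentially the same approach as the paper: both start from the identity for $\sum F^{ij}(\log|Dw|^2)_{ij}$, use Cauchy--Schwarz to trade the positive quadratic $\sum F^{ij}w_{ki}w_{kj}$ against half of the derivative--squared term, and then estimate the remaining third--order piece via the once--differentiated equation \eqref{diff1time}. The only organizational difference is that the paper expands $w_{ijk}$ in one shot and groups terms by order of magnitude, whereas you split $w_{kii}=u_{kii}+(\varphi d)_{kii}$ first; the bookkeeping is the same either way (note that a few of your ``linear $u_{ab}$ pieces'' actually carry a factor of $d$ and contribute $O(d)\sum F^{ii}|\lambda_i|$ rather than $O(|Du|^{-1})\sum F^{ii}|\lambda_i|$, but this is still within the claimed $O(d+|Du|^{-1})$ bound).
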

\begin{proof}
By direct calculation, we have
\begin{align}
F^{ij}(\log|Dw|^2)_{ij}=& \sum_{i,j,k}\frac{2w_{k}F^{ij}w_{ijk}}{|Dw|^2}+\sum_{i,j,k}\frac{2F^{ij}w_{ki}w_{kj}}{|Dw|^2}-\sum_{i,j}F^{ij}\frac{(|Dw|^2)_{i}(|Dw|^2)_{j}}{|Dw|^4}\notag\\
\ge& \sum_{i,j,k}\frac{2w_{k}F^{ij}w_{ijk}}{|Dw|^2}-\sum_{i,j}\frac{1}{2}F^{ij}\frac{(|Dw|^2)_{i}(|Dw|^2)_{j}}{|Dw|^4},\label{Fijlog}
\end{align}
where we use Cauchy inequality in the last inequality.
To prove the lemma, we only need to estimate the first term on the right side of the above inequality.\\
By direct calculations, we get
\begin{eqnarray*}
w_{i}&=&(1+\varphi_{u}d)u_{i}+\varphi_{i}d+\varphi d_{i}=(1+\varphi_{u}d)u_{i}+O(1),
\\
w_{ij}&=&(1+\varphi_{u}d)u_{ij}+(\varphi_{uj}d +\varphi_{uu}u_j d+\varphi_{u}d_{j})u_{i}\\
& &+ 
(\varphi_{iu}d+\varphi_{u} d_{i} )u_{j}+\varphi_{ij} d+ \varphi_{i}d_{j}+ \varphi_{j} d_i+\varphi d_{ij} ,\\
w_{ijk}&=& (1+\varphi_u d ) u_{ijk} + (\varphi_{uu}  u_k  d+ \varphi_{uk} d + \varphi_u d_k) u_{ij}+(\varphi_{uu}  u_j d + \varphi_{uj} d + \varphi_u d_j) u_{ik}\\
& & +(\varphi_{uu}  u_i d + \varphi_{ui} d + \varphi_u d_i) u_{jk}+ \varphi_{uuu} u_i u_j u_k d+ \varphi_{uuj}  u_k u_i d + \varphi_{uuk}  u_j u_i d + \varphi_{uui}  u_k u_j d\\
& & +\varphi_{uu} u_j u_i d_k + \varphi_{uu} u_k d_j u_i+ \varphi_{uu} u_k d_i u_j + (\varphi_{u jk} d + \varphi_{uj} d_k+\varphi_{uk}d_j + \varphi_u d_{jk} )u_i\\
& & +(\varphi_{uik}d+\varphi_{ui}d_k+\varphi_{uk}d_i+\varphi_u d_{ik})u_j+(\varphi_{uij}d+\varphi_{ui}d_j+\varphi_{uj}d_i+\varphi_u d_{ij})u_k \\
& & +\varphi_{ijk} d + \varphi_{ij}d_k + \varphi_{ik} d_j+ \varphi_{jk}d_i + \varphi_i d_{jk}+ \varphi_j d_{ik}+ \varphi_k d_{ij}+\varphi d_{ijk}\\
&=&(1+\varphi_{u}d)u_{ijk}+O(d|Du|+1)(|u_{ij}|
+|u_{ik}|+|u_{jk}|)+O(d|Du|^3)\\
&&+O(|Du|^2+|Du|+1).
\end{eqnarray*}
Then we have
\begin{align*}
\sum_{i,j,k}w_{k}F^{ij}w_{ijk}=& (1+\varphi_{u}d)\sum_{i,j,k}F^{ij}u_{ijk}w_{k}+O(d|Du|^2+|Du|){F^{ii}}|u_{ii}|\\
&+O(d|Du|^4+|Du|^3+|Du|^2+|Du|)\mathcal{F}\\
=&(1+\varphi_{u}d)\sum_{i}\frac{\lambda_i}{f^2+\lambda_i^2}\sum_{k}f_kw_{k}+O(d|Du|^2+|Du|){F^{ii}}|u_{ii}|\\
&+O(d|Du|^4+|Du|^3+|Du|^2+|Du|)\mathcal{F}
\\
=&O(d|Du|^2+|Du|)\sum_{i}{F^{ii}}|\lambda_i|+O(d|Du|^4+|Du|^3+|Du|^2+|Du|)\mathcal{F}.
\end{align*}
Thus we get 
\begin{align*}
\sum_{i,j,k}\frac{w_{k}F^{ij}w_{ijk}}{|Dw|^2}\ge -C(d+|Du|^{-1})
\sum_{i}{F^{ii}}|\lambda_i|-C(d|Du|^2+|Du|)\mathcal{F}.
\end{align*}
Combining the above inequality with \eqref{Fijlog}, we get the lemma.
\end{proof}

Next we prove the near boundary gradient estimate. 
We need a lemma due to Warren-Yuan \cite{WarrenYuan2010AJM}.
\begin{lemma}[\cite{WarrenYuan2010AJM}]
\label{WY2010}
If $\Theta=(n-2)\frac{\pi}{2}$, we have
$\sum\limits_{i}F^{ii}\lambda_i\ge 0$.
\end{lemma}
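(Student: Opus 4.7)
The plan is to reduce the inequality to a pure algebraic fact about the phases and prove that by induction. Setting $\theta_i := \arctan(\lambda_i/f) \in (-\pi/2, \pi/2)$, so that $\sum_i \theta_i = (n-2)\pi/2$ by hypothesis, a direct substitution $\lambda_i = f\tan\theta_i$ gives
\[
F^{ii}\lambda_i = \frac{f\lambda_i}{f^2+\lambda_i^2} = \sin\theta_i \cos\theta_i = \tfrac12 \sin(2\theta_i),
\]
so the lemma is equivalent to the inequality
\[
(\ast)\qquad \sum_{i=1}^n \sin(2\theta_i) \ge 0 \quad\text{whenever}\quad \theta_i \in [-\tfrac{\pi}{2}, \tfrac{\pi}{2}],\ \sum_i \theta_i = \tfrac{(n-2)\pi}{2}.
\]
The closed interval is legitimate since $\sin(2\,\cdot\,)$ is continuous and we only need a pointwise non-strict inequality.

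I would prove $(\ast)$ by induction on $n$. The base case $n=2$ is immediate since $\theta_2=-\theta_1$ makes the sum vanish. For the inductive step, observe that $G(\theta) := \sum_{i=1}^n \sin(2\theta_i)$ is continuous on the compact slice $K_n := \{\theta \in [-\pi/2,\pi/2]^n : \sum_i \theta_i = (n-2)\pi/2\}$ and therefore attains its minimum; it suffices to verify non-negativity at every interior critical point of the constrained problem and at every boundary point.

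At an interior critical point, Lagrange multipliers force $\cos(2\theta_i) = \mu$ constant, so each $\theta_i \in \{\beta, -\beta\}$ for some $\beta \in [0, \pi/2)$. Writing $r$ for the number of coordinates equal to $\beta$ and $s = n - r$ for those equal to $-\beta$, the constraint becomes $(r-s)\beta = (n-2)\pi/2$, which forces $r - s > n - 2$; combined with the parity relation $r-s \equiv n \pmod 2$, this pins down $r = n$, $s = 0$, so all $\theta_i = (n-2)\pi/(2n)$ and $G = n\sin(2\pi/n) > 0$. At a boundary point some $\theta_j \in \{\pm\pi/2\}$: if $\theta_j = \pi/2$ then $\sin(2\theta_j)=0$ and the remaining $n-1$ phases sum to $((n-1)-2)\pi/2$, so the induction hypothesis applied to $n-1$ yields $G \ge 0$; if $\theta_j = -\pi/2$, the remaining phases sum to $(n-1)\pi/2$, which (together with each lying in $[-\pi/2,\pi/2]$) forces each to equal $\pi/2$, giving $G = 0$. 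In every case $G \ge 0$ on $K_n$, completing the induction.

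The main obstacle is the classification of interior Lagrange critical points: one must carefully carry out the small combinatorial argument, using the parity of $r-s$, to rule out "mixed sign" configurations and conclude that the only stationary point is the symmetric one $\theta_i \equiv (n-2)\pi/(2n)$. Everything else is a clean continuity/compactness argument together with a reduction to $n-1$ variables on the boundary.
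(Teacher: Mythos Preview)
Your argument is correct. The trigonometric substitution $F^{ii}\lambda_i = \tfrac12\sin(2\theta_i)$ is valid, the compactness/Lagrange-multiplier analysis is sound, and the parity argument pinning down $r-s=n$ at interior critical points is clean (for $n\ge 3$ the constraint forces $\beta>0$, hence $r-s>n-2$, and $r-s\equiv n\pmod 2$ with $r-s\le n$ leaves only $r-s=n$). The boundary reduction to $K_{n-1}$ and the degenerate case $\theta_j=-\pi/2$ are handled correctly.

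As for comparison: the paper gives no proof of this lemma at all; it is simply quoted from Warren--Yuan \cite{WarrenYuan2010AJM}. The approach there (and the one latent in this paper's later computations around \eqref{mathcalF}) is algebraic: at the critical phase the equation reads $\sigma_{n-1}-\sigma_{n-3}+\cdots=0$ in the variables $\lambda_i/f$, and one computes $\sum_i F^{ii}\lambda_i$ directly in terms of elementary symmetric polynomials over the common denominator $V=\prod_i\sqrt{f^2+\lambda_i^2}$, using the equation to cancel the top term. Your optimization/induction proof is genuinely different and more elementary in that it needs no symmetric-function identities; the price is that it is tailored to the exact critical value $\Theta=(n-2)\pi/2$ and does not immediately yield the quantitative comparison $\sum_i F^{ii}\lambda_i \le C\,\mathcal F$ that the algebraic route provides and that the paper exploits in the uniform gradient estimate.
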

We use the auxiliary function from Ma-Xu in \cite{MaXuAdv16} to prove the following near boundary gradient estimate and thus we get the global gradient estimates.
\begin{theorem}
Let 
$u$ be a $C^3$ solution of problem \eqref{GSLE}. There exists a uniform constant $C$ depending on $|\varphi|_{C^3},|f|_{C^1},|f^{-1}|_{L^\infty}$, $n,|\partial \Omega|_{C^2},|u|_{C^0}$ such that 
\begin{align}
\max_{ \overline\Omega_{\mu}} |Du|\le C.
\end{align}
\end{theorem}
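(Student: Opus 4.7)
The plan is to adapt the Ma--Xu auxiliary function method \cite{MaXuAdv16}. The key observation is that with $w := u + \varphi d$, the Neumann condition $u_\nu = \varphi$ combined with $d|_{\partial\Omega} = 0$ and $Dd|_{\partial\Omega} = -\nu$ forces $w_\nu = 0$ on $\partial\Omega$. I would work on $\overline{\Omega_\mu}$ with the auxiliary function
\[
\Phi(x) = \log|Dw|^2 + \alpha_1 d(x) - \frac{1}{\alpha_2}\log(M_0 - u),
\]
where $M_0 > \sup_\Omega u + 1$ and $\alpha_1, \alpha_2 > 0$ are large constants to be chosen. Let $x_0 \in \overline{\Omega_\mu}$ be a maximum point.

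If $d(x_0) = \mu$, the interior gradient estimate on $B_{\mu/2}(x_0)$ gives $|Du(x_0)| \leq C$ immediately, which bounds $\Phi$ on $\overline{\Omega_\mu}$. If $x_0 \in \partial\Omega$, the Hopf lemma requires $\Phi_\nu(x_0) \geq 0$. Differentiating the identity $w_\nu \equiv 0$ tangentially along $\partial\Omega$ yields $w_{e\nu} = -\sum_k w_k(\nu^k)_e$ for each unit tangent $e$, hence $|\sum_k w_k w_{k\nu}| \leq C|Dw|^2$. Combined with $d_\nu = -1$ and the boundedness of $u_\nu = \varphi$, this produces $\Phi_\nu(x_0) \leq C - \alpha_1 < 0$ for $\alpha_1$ sufficiently large, a contradiction.

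If $x_0$ lies in the interior of $\Omega_\mu$, I apply $F^{ij}\Phi_{ij}(x_0) \leq 0$, using Lemma \ref{FijlogDw} for the $\log|Dw|^2$ summand. After rotating to diagonalize $D^2u$ and assuming (WLOG) $u_n \geq |Du|/\sqrt n$, so that $F^{nn} \sim \mathcal{F}$, the function $g(u) = -\tfrac{1}{\alpha_2}\log(M_0-u)$ produces the positive quadratic $g''(u)F^{ij}u_i u_j \gtrsim |Du|^2 \mathcal{F}/(M_0-u)^2$. This choice of $g$ is made precisely so that $g''/(g')^2 = \alpha_2$, which allows absorption of the negative cross term in Lemma \ref{FijlogDw} (arising when $\nabla\Phi(x_0)=0$ is used to rewrite $(|Dw|^2)_i/|Dw|^2$ in terms of $d_i$ and $u_i$) provided $\alpha_2$ is large. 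The remaining bad contributions $C(d|Du|^2 + |Du|)\mathcal{F}$ and $C(d+|Du|^{-1})\sqrt{nf}\,\mathcal{F}^{1/2}$ from Lemma \ref{FijlogDw} are dominated by the good term once $\mu$ is taken small. In the critical phase $\Theta = (n-2)\pi/2$, the identity $\sum F^{ii}\lambda_i \geq 0$ from Lemma \ref{WY2010} is essential for handling the first-order term $g'(u)F^{ij}u_{ij}$.

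The main obstacle is Case 3: one must choose $\mu, \alpha_1, \alpha_2$ consistently so that the negative cross term in Lemma \ref{FijlogDw} (which appears because the natural good term $2F^{ij}w_{ki}w_{kj}/|Dw|^2$ was dropped via Cauchy--Schwarz) is fully absorbed by the convex function $g(u)$, while simultaneously $\alpha_1$ is kept large enough to close off Case 2. The critical phase adds the further subtlety that $\sum F^{ii}\lambda_i$ has no a priori sign; this is resolved precisely by the Warren--Yuan inequality recorded in Lemma \ref{WY2010}.
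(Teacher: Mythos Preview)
Your proposal follows essentially the same Ma--Xu auxiliary function strategy as the paper, with the same three-case split and the same key inputs (the interior gradient estimate, Lemma~\ref{FijlogDw}, and Lemma~\ref{WY2010}). Two points of divergence are worth flagging. First, your extra parameter $\alpha_2$ is unnecessary and the claim ``provided $\alpha_2$ is large'' is misleading: with $g(u)=-\tfrac{1}{\alpha_2}\log(M_0-u)$ one has $g''-\tfrac12(g')^2=(g')^2(\alpha_2-\tfrac12)$, so any $\alpha_2>\tfrac12$ absorbs the cross term, while taking $\alpha_2$ large actually shrinks the net positive coefficient $(\alpha_2-\tfrac12)/\alpha_2^2$; the paper simply takes $\alpha_2=1$. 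Second, in Case~3 for the critical phase you assert that the bad term $C(d+|Du|^{-1})\sqrt{nf}\,\mathcal{F}^{1/2}$ is dominated by $c|Du|^2\mathcal{F}$ ``once $\mu$ is small,'' but this comparison requires a quantitative link between $\mathcal{F}$ and $|Du|$. The paper supplies this by extracting from the first-order condition $G_n=0$ the two-sided bound $C^{-1}u_n^2\le -u_{nn}\le Cu_n^2$ (their claim~\eqref{unnun}), which forces $\lambda_n<0$, hence $F^{nn}\sim\mathcal{F}\sim|Du|^{-4}$ and $\sum_iF^{ii}|\lambda_i|\le C|Du|^{-2}$; with these in hand the absorption goes through. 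You implicitly invoke this step when you write ``so that $F^{nn}\sim\mathcal{F}$,'' but it deserves to be made explicit, since without the size estimate on $|\lambda_n|$ the inequality $c|Du|^2\mathcal{F}^{1/2}\ge C(d+|Du|^{-1})$ cannot be verified.
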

\begin{proof}
We consider the auxiliary function in $\overline\Omega_{\mu}$
\begin{align}
    G=\log|Dw|^2-\log(M_0-u)+a_0 d,
\end{align}
where $w=u+\varphi d(x)$ and $M_0=|u|_{C^0}+1$.
Assume $G(x_0)=\max_{x\in \overline \Omega_{\mu}} G(x)$. We divide the following three cases to derive the  estimate:

{\bf{Case 1:} $x_0\in \partial\Omega_{\mu}\cap \Omega:=\{x\in\Omega: d(x)=\mu\}$}\\
This follows from the interior gradient estimate \eqref{Interiorgradient}.

{\bf{Case 2:} $x_0\in \partial\Omega$}\\
By choosing $\alpha_0$ large, the estimate follows from $G_{\nu}(x_0)\ge 0$ which is the same as in \cite{MaXuAdv16}.

{\bf{Case 3:} $x_0\in \Omega_{\mu}$}

{{The key point of the proof is the following: \\
We choose the coordinate such that $\{D^2 u(x_0)\}$ is diagonal.
W.L.O.G. we may assume $u_n\ge \frac{1}{n}|Du|$. Then $w_n\sim u_n\sim |Du|$. $G_n=0$ implies $u_{nn}<0$.
Thus $F^{nn}\sim \mathcal{F}$ and $F^{nn}u_n^2\sim \mathcal{F}|Du|^2$  is the leading term.
}}

At $x_0$, we  have
\begin{align}\label{1stdc}
0=G_i=\frac{|Dw|^2_i}{|Dw|^2}+(M-u)^{-1}u_{i}+a_0 d_i.
\end{align}

By the maximum principle, at $x_0$, we have 
\begin{align}
0\le& \sum_{i,j}F^{ij}G_{ij}=\sum_{i,j}F^{ij}(\log|Dw|^2)_{ij}\notag\\
&+(M-u)^{-2}\sum_{i}F^{ii}u_{i}^2+(M-u)^{-1}\sum_{i}F^{ii}\lambda_i+a_0\sum_{i}F^{ii}d_{ii}.
\end{align}
By  the estimate for $\sum\limits_{i,j}F^{ij}(\log|Dw|^2)_{ij}$ in Lemma \ref{FijlogDw} and  the first derivative condition \eqref{1stdc}, we have
\begin{align}
0\ge& \sum_{i,j}F^{ij}G_{ij}
\ge
-C (d+|Du|^{-1}) \sum_{i}{F^{ii}}|\lambda_i|-C(d|Du|^2+|Du|)\mathcal{F}\notag\\
&-\frac{1}{2}\sum_{i,j}F^{ij}\frac{(|Dw|^2)_{i}(|Dw|^2)_{j}}{|Dw|^4}+(M-u)^{-2}\sum_{i}F^{ii}u_{i}^2\\
&+(M-u)^{-1}\sum_{i}F^{ii}\lambda_i+a_0\sum_{i}F^{ii}d_{ii}
\notag\\
=&-C (d+|Du|^{-1})\sum_{i}{F^{ii}}|\lambda_i|-C(d|Du|^2+|Du|)\mathcal{F}+\frac{1}{2}(M-u)^{-2}\sum_{i}F^{ii}u_{i}^2\notag\\
&+(M-u)^{-1}\sum_{i}F^{ii}\lambda_i-(M-u)^{-1}a_0 \sum\limits_{i}F^{ii} u_id_i+a_0\sum_{i}F^{ii}(d_{ii}-\frac{1}{2} a_0d_i^2)\notag\\
\ge& -C(d+|Du|^{-1}) \sum_{i}{F^{ii}}|\lambda_i|-C(d|Du|^2+|Du|+\alpha_0)\mathcal{F}\notag\\
&+\frac{1}{4}(M-u)^{-2}\sum_{i}F^{ii}u_{i}^2+(M-u)^{-1}\sum_{i}F^{ii}\lambda_i,\label{FW2023122}
\end{align}
where in the last inequality we use the Cauchy inequality.

%By Lemma \ref{WarrenYuan2010AJM}, we know $\sum_{i}F^{ii}\lambda_i\ge 0$.

Since there  exists $i_0$ such that 
$
|u_{i_0}|^2\ge \frac{1}{{n}}|Du|^2$, without loss of generality, we may assume \begin{align}u_n\ge \frac{1}{\sqrt{n}}|Du|.
\end{align}
Note that $1+d\varphi \in (\frac{2}{3},\frac{4}{3})$ and assuming $u_n>>1$, we have 
\begin{align}
w_n=(1+\varphi d)u_{n}+O(1)\in (\frac{1}{2}u_n,2u_n).
\end{align}
Based on this inequality, we claim that:
\begin{align}\label{unnun}
C(n)^{-1}u^2_{n}\le -u_{nn}\le C(n) u^2_{n}.
\end{align}
To demonstrate this, we consider \eqref{1stdc}:
\begin{align}
w_n w_{nn}=&-\frac{u_n|Dw|^2}{2(M-u)}-\sum_{k=1
}^{n-1}w_kw_{kn}-\frac{a_0}{2}d_n|Dw|^2\notag\\
=&-(\frac{1}{2}+O(d))\frac{u_n|Dw|^2}{M-u}+O(|Du|^2),
\end{align}
where we utilize $w_{kn}=O(d|Du|^2+|Du|)$ and $u_{kn}=0, \forall k<n$.
The claim then follows from the above analysis and $u_{nn}=\frac{w_{nn}}{1+\varphi_{u}d}+O(d|Du|^2+|Du|)$.

Given that $\lambda_n=u_{nn}<0$ and $\Theta\ge (n-2)\frac{\pi}{2}$, we can establish the inequality:
\begin{equation}
\lambda_i\ge |u_{nn}|, \quad \forall \quad i<n. \label{lambda_i>u_nn}
\end{equation}
If $\Theta\geq (n-2)\frac{\pi}{2}+\delta$, then from Lemma \ref{WWY}  
\begin{equation*}
 -\lambda_n\le C(\delta)\max|f|.   
\end{equation*}
Thus we have by  \eqref{unnun} that 
\begin{equation*}
  |Du|^2(x_0) \leq C(\delta,f,n). 
\end{equation*}
Without loss of generality, let's assume $\theta=(n-2)\frac{\pi}{2}$.
By combining \eqref{unnun} and \eqref{lambda_i>u_nn}, we obtain:
\begin{align}
F^{nn}=&\frac{f}{f^2+\lambda_n^2}\ge \frac{1}{n}\sum_{i}\frac{f}{f^2+\lambda_i^2}=\frac{\mathcal{F}}{n},\notag\ \\
\sum_{i}F^{ii}|\lambda_{i}|\le&C\sum_{i}|\lambda_i|^{-1}\le C|Du|^{-2}.
\label{Fiilambdai23122}
\end{align}
This leads to:
\begin{align}
\frac{1}{2}\sum_{i}F^{ii}u_{i}^2\ge \frac{1}{2}F^{nn}u_n^2\ge \frac{\mathcal{F}|Du|^2}{2n^2}.\label{Fiiui23122}
\end{align}

We observe from \eqref{unnun} that
\begin{equation}
    F^{nn}u_n^2\geq \frac{f u^2_n}{f^2+u^2_{nn}} \geq c|Du|^{-2}. \label{F^nnu_n^2} 
\end{equation}

Inserting  inequalities \eqref{Fiiui23122}, \eqref{Fiilambdai23122} and Lemma \ref{WY2010} into \eqref{FW2023122},
we obtain 
\begin{align}
0\ge\sum_{i,j}F^{ij}G_{ij}\ge&\Big(\frac{1}{2n^2}|Du|^2-C(d|Du|^2+|Du|)\Big) \mathcal F\notag\\
&+\frac{1}{2}\sum_{i}F^{ii}u_i^2-C(d|Du|^{-2}+|Du|^{-3}).
\end{align}
Due to \eqref{F^nnu_n^2}, the last two terms are positive provided $\mu$ is small enough. Therefore, we get the uniform estimate.

\end{proof}
\subsection{Uniform gradient estimate for the classical Neumann problem}
We will show the uniform gradient estimate which is independent of the $C^0$ norm of the solution. Here the uniformly convexity condition is crucial. 
\begin{lemma} \label{UniformGradient}
Let $u$ be  a $C^3$ solution of the  problem \eqref{aslte11}. For suffciant small constant $\epsilon$, there exists a uniform constant $C$ depending on $|\phi|_{C^3},|f|_{C^1},|f^{-1}|_{L^\infty},n,|\partial \Omega|_{C^2}, |u|_{C^0}$ and uniformly convexity of  $\partial \Omega$ such that
\begin{align}
\max_{\overline\Omega}|Du|\le C.
\end{align}
\end{lemma}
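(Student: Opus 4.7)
The plan is to adapt the auxiliary function from the preceding near-boundary gradient estimate so as to eliminate its dependence on $\|u\|_{C^0}$, which otherwise diverges as $\varepsilon\to 0$. Two uniform ingredients are at hand: the bound $\|\varepsilon u\|_{C^0(\overline\Omega)}\le C$ established in \eqref{epsilonU}, and the strict convexity of $\partial\Omega$. Accordingly, I propose
\begin{equation*}
 w := u + (-\varepsilon u + \phi)\,d, \qquad G := \log|Dw|^2 + \alpha\, h_0 + a_0\, d,
\end{equation*}
where $h_0(x) = \tfrac{1}{2}|x-x^\ast|^2$ for some fixed $x^\ast\in\Omega$, and $\alpha, a_0$ are large constants to be determined. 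Then $w_\nu\equiv 0$ on $\partial\Omega$, $\sum_{ij}F^{ij}(h_0)_{ij}=\mathcal F$, and by strict convexity $(h_0)_\nu=(x-x^\ast)\cdot\nu\ge c_0>0$ on $\partial\Omega$; moreover $|Dw|\sim|Du|$ when $|Du|$ is large, thanks to $|\varepsilon u|\le C$. By the interior gradient estimate \eqref{Interiorgradient}, it suffices to bound $|Du|$ on a boundary strip $\Omega_\mu$ for some $\mu$ depending only on $\Omega, f, \phi$.

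Suppose $G$ attains its maximum at $x_0\in\Omega_\mu\setminus\partial\Omega$. I would run the argument of the preceding theorem essentially unchanged: diagonalize $D^2u(x_0)$ with $u_n\ge|Du|/\sqrt n$, so that $w_n\sim u_n\sim|Du|$, and derive from $G_n=0$ and the equation that $u_{nn}\sim -|Du|^2$ and $\lambda_i\ge|u_{nn}|$ for $i<n$, as in \eqref{unnun}--\eqref{lambda_i>u_nn}. The essential new input is that, in place of the $C^0$-dependent term $(M_0-u)^{-2}\sum F^{ii}u_i^2$, one must retain the positive quadratic contribution
\begin{equation*}
 \frac{2}{|Dw|^2}\sum_{i,k}F^{ii}w_{ki}^2 \;\ge\; \frac{2 F^{nn} w_{nn}^2}{|Dw|^2} \;\ge\; c\,\mathcal F\,|Du|^2,
\end{equation*}
which was discarded in the derivation of Lemma~\ref{FijlogDw}. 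Using Lemma~\ref{WWY} to estimate $\sum F^{ii}|\lambda_i|\le C|Du|^{-2}$ as in \eqref{Fiilambdai23122}, the maximum principle $\sum F^{ij}G_{ij}\le 0$ reduces, modulo lower order terms, to an inequality of the form
\begin{equation*}
 (c-C\mu)\,\mathcal F\,|Du|^2 \;\le\; C(\alpha^2+a_0^2+a_0+|Du|)\,\mathcal F,
\end{equation*}
so shrinking $\mu$ to a universal value and fixing $\alpha,a_0$ gives $|Du|(x_0)\le C$.

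Suppose now $x_0\in\partial\Omega$. The Hopf condition $G_\nu(x_0)\ge 0$ must hold. Choosing boundary coordinates with $e_n=\nu$, the identity $w_\nu\equiv 0$ on $\partial\Omega$ differentiates tangentially to
\begin{equation*}
 w_{kn}(x_0) \;=\; -B_{k\ell}\,w_\ell(x_0) + O(1), \qquad k,\ell<n,
\end{equation*}
where $B$ denotes the second fundamental form of $\partial\Omega$. Since $Dw(x_0)$ is tangential, uniform convexity $B\ge\kappa_0 I$ furnishes
\begin{equation*}
 (\log|Dw|^2)_\nu(x_0) \;=\; \frac{2\sum_{k<n}w_k w_{kn}}{|Dw|^2} \;\le\; -2\kappa_0 + O(|Dw|^{-1}).
\end{equation*}
Combining with $\alpha(h_0)_\nu\le \alpha C_1$ and $a_0 d_\nu=-a_0$ gives
\begin{equation*}
 0 \;\le\; G_\nu(x_0) \;\le\; -2\kappa_0 + \alpha C_1 - a_0 + O(|Dw(x_0)|^{-1}),
\end{equation*}
so choosing $a_0$ large depending on $\alpha$ forces $|Dw(x_0)|\le C$, hence $|Du(x_0)|\le C$.

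The principal obstacle is the simultaneous calibration of $\alpha, a_0, \mu$: the interior calculation demands $\mu$ small relative to the constant in front of the positive term $\mathcal F|Du|^2$ and $\alpha$ not too large relative to the same coefficient, while the boundary calculation requires $a_0$ large relative to $\alpha C_1$; one must verify that a consistent choice exists in terms of the data $\Omega, f, \phi$ alone. A secondary technicality is the rigorous justification of the identity $w_{kn}=-B_{k\ell}w_\ell+O(1)$ on $\partial\Omega$: this follows because $w_\nu\equiv 0$ is a boundary identity whose tangential derivatives vanish in the intrinsic sense, so the ambient mixed derivatives pick up only a second-fundamental-form correction, with the $O(1)$ error absorbing $D\phi$, $\varepsilon Du$ (controlled via $|\varepsilon u|_{C^0}\le C$ together with the tangential derivative of the boundary condition), and curvature quantities of $\partial\Omega$.
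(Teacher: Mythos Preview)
There is a genuine gap in the interior step. Your claim that $u_{nn}\sim -|Du|^2$ ``as in \eqref{unnun}--\eqref{lambda_i>u_nn}'' is not available for your auxiliary function. In the preceding theorem the two-sided bound \eqref{unnun} was forced by the term $-\log(M_0-u)$: at the critical point, $G_n=0$ contained the contribution $(M_0-u)^{-1}u_n\sim |Du|$, which is what drove $w_{nn}$ to order $-|Du|^2$. You have deliberately removed this term (precisely because $M_0$ blows up as $\varepsilon\to 0$), and replaced it by $\alpha h_0+a_0 d$. But then $G_n=0$ reads
\[
\frac{2\sum_k w_k w_{kn}}{|Dw|^2}=-\alpha(h_0)_n-a_0 d_n=O(\alpha+a_0),
\]
and since the off-diagonal pieces $w_{kn}$ ($k<n$) are only $O(\varepsilon|Du|+1)$, you obtain at best $|u_{nn}|\le C(\alpha+a_0)|Du|$, not $|u_{nn}|\gtrsim |Du|^2$. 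In particular there is no mechanism forcing $u_{nn}$ to be negative at all. Consequently the retained quadratic term satisfies only
\[
\frac{2F^{nn}w_{nn}^2}{|Dw|^2}\le C(\alpha+a_0)^2F^{nn}\le C(\alpha+a_0)^2\mathcal F,
\]
which is of the same order as the bad terms on the right of your displayed inequality and cannot yield $c\,\mathcal F|Du|^2$. Nor can you conclude $\lambda_i\ge |u_{nn}|$ and $\sum F^{ii}|\lambda_i|\le C|Du|^{-2}$ without first knowing $u_{nn}<0$.

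The paper's argument is structurally different. It takes $P=\log|Dw|^2+\tfrac{b}{2}|x|^2$ with $b$ \emph{small}; the boundary Hopf computation then succeeds because the harmful convex term $b(h_0)_\nu$ is small relative to $-2\kappa_0$ (no extra $a_0 d$ is needed). At an interior maximum one again only gets $|u_{nn}|\le C|Du|$, and the estimate is closed not via a quadratic gain $\mathcal F|Du|^2$ but via the algebraic inequality
\[
\sum_i F^{ii}\lambda_i \le C\,\mathcal F
\]
at the critical phase $\Theta=\tfrac{(n-2)\pi}{2}$, obtained from the polynomial form $\sigma_{n-1}(\tfrac{D^2u}{f})-\sigma_{n-3}(\tfrac{D^2u}{f})+\cdots=0$ of the equation. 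This is the missing idea in your proposal; without either restoring a term that forces $u_{nn}\sim -|Du|^2$ (which you cannot do $\varepsilon$-uniformly) or invoking this structural inequality, the interior case does not close.
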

\begin{remark}
The new problematic term is $|Du|^{-1}\sum_{i,j}{F^{ij}}u_{ij}$, while the favorable term is $\mathcal{F}$. The crucial observation is that when $\Theta=(n-2)\frac{\pi}{2}$, we can establish $0\le \sum_{i,j}{F^{ii}}u_{ij}\le C\mathcal{F}$.
\end{remark}
\begin{proof}
For simplicity we assume $0\in \Omega$ and consider the following function 
\begin{align}
    P=\log |Dw|^2+\frac{b}{2}|x|^2,
\end{align}
where $w=(1+\varepsilon h)u-\phi h$ and $h$ is the defining function of $\Omega$ satisfying $h_{\nu}=|Dh|=1$ on $\partial\Omega$. 
Assume $P(x_0)=\max_{\overline \Omega} P(x)$.\\
{\bf{Case 1:}} $x_0\in \partial \Omega$.\\
We use
\begin{align}
0\le P_{\nu}(x_0)
\end{align}
and refer to Proposition 5 in \cite{QiuXia19} to obtain the estimate from the uniform convexity of $\partial \Omega$ provided that $b$ is small.\\
{\bf{Case 2: $x_0\in \Omega$.}}\\
We choose the coordinate such that $u_{ij}(x_0)=\lambda_i\delta_{ij}$.
We assume $|Du|(x_0)>>1$. Without loss of generality, we may assume at $x_0$
\begin{align*}
u_{n}\ge \frac{1}{\sqrt{n}}|Du|.
\end{align*}
Firstly, we have
\begin{align}\label{Pi}
0=P_i=\frac{2w_kw_{ki}}{|Dw|^2}+bx_i.
\end{align}
Recall that \begin{align*}
w_k=&(1+\varepsilon h)u_k+ h_k \varepsilon u-\phi_k h - \phi h_k, \\
w_{ki}=&(1+\varepsilon h)u_{ki}+\varepsilon(h_iu_k+h_ku_i)+h_{ki}\varepsilon u-\phi_{ki}h - \phi_k h_i -\phi_i h_k -\phi h_{ki},\\
w_{kij}=& (1+\epsilon h) u_{kij}+\epsilon (h_j u_{ki}+h_i u_{kj}+h_{k} u_{ij}+h_{ij}u_k+h_{kj}u_i+h_{ki} u_j)\\
& +\epsilon h_{kij} u-\phi_{kij} h - \phi_{ki}h_j -\phi_{kj} h_i -\phi_k h_{ij}\\
&-\phi_{ij} h_k - \phi_i h_{kj} - \phi_j h_{ki} -\phi h_{kij}.
\end{align*}
Combing the above with \eqref{Pi} and noting that  $\varepsilon u$ is uniformly bounded, we have at $x_0$
\begin{align}
w_{n}\sim
u_{n}&\sim |Du|,\nonumber \\
|u_{nn}|\le& C|Du|.\label{lambdanleq}
\end{align}
Similar as Lemma \ref{FijlogDw}, we obtain
\begin{align}
0\ge& \sum_{i,j}F^{ij}P_{ij}
\ge
\sum_{i,j}F^{ij}(\log|Dw|^2)_{ij}+b\mathcal{F}\notag\\
\ge& -C|Du|^{-1} 
\Big|\sum_{i}{F^{ii}}\lambda_i\Big|-C\epsilon |Du|^{-1} 
\sum_{i}{F^{ii}}|\lambda_i|\\
&  +\Big(b(1-Cb)-C|Du|^{-1}-\varepsilon\Big)\mathcal{F} . \label{F^ijP_ijUniformlyGradient}
\end{align}
If $\lambda_n \geq 0$, we have 
\begin{equation*}
    \sum_{i}{F^{ii}}|\lambda_i| = \sum_{i}{F^{ii}} \lambda_i.
\end{equation*}
Or $\lambda_n <0 $, we have
\begin{equation*}
    \sum_{i}{F^{ii}}|\lambda_i| = \sum_{i}{F^{ii}}\lambda_i + 2F^{nn} |\lambda_n|.
\end{equation*}
Regardless of the sign of $\lambda_n$, equation \eqref{F^ijP_ijUniformlyGradient} can be expressed as:
\begin{equation}
    0\geq -C|Du|^{-1} 
\Big|\sum_{i}{F^{ii}}\lambda_i\Big| -2 C\epsilon |Du|^{-1} F^{nn} |\lambda_n| +\frac{b}{2}\mathcal{F}. \label{F^ijP_ijUniformlyGradient2}
\end{equation}
If $\Theta\geq (n-2)\frac{\pi}{2}+\delta$, from Lemma \ref{WWY} we can deduce that
\begin{equation*}
  \mathcal{F}\ge \frac{f}{f^2+ \min\limits_{1\le i\le n}\lambda^2_i} \geq c_0 (f,\delta) .  
\end{equation*}
We know $\sum\limits_{i}{F^{ii}}|\lambda_i| \leq C$, then we obtain 
\begin{equation*}
    |Du|\le C.
\end{equation*}
In the following, we  assume $\Theta=(n-2)\frac{\pi}{2}$. By Lemma \ref{WY2010}, we know that
\begin{equation*}
    \sum_i F^{ii}\lambda_i\ge 0.
\end{equation*}
We need prove $\sum\limits_i F^{ii}\lambda_i$ can be controlled by $\mathcal{F}$.
Without loss of generality, let's assume that $\lambda_1 \geq \lambda_2 \geq \cdots \geq \lambda_n$.\\

Case 1: $\lvert \lambda_n\rvert <C$.
We have $\mathcal{F}\ge c_0$, and then it is not hard to derive the estimate 
\begin{equation*}
  |Du|\le C.  
\end{equation*}

Case 2: $\lvert \lambda_n \rvert \geq C$. \\
Due to $\lambda_{n-1}+\lambda_n \geq 0$, we have 
\begin{equation*}
    \mathcal{F}\geq \frac{f}{f^2+\lambda^2_n} \geq c \frac{\lambda_1 \lambda_2 \cdots \lambda_{n-2}}{V}, 
\end{equation*}
where $V=\prod\limits_{i=1}^{n}\sqrt{f^2+\lambda_i^2}$ and $c>0$ depends only on $f$.
When $\Theta =\frac{(n-2)\pi}{2}$, our equation is 
\begin{equation*}
\sigma_{n-1} (\frac{D^2 u}{f}) - \sigma_{n-3} (\frac{D^2 u}{f}) +\cdots = 0.    
\end{equation*}
We know that 
\begin{equation*}
    F^{ij} = \frac{\sigma^{ij}_{n-1} - \sigma^{ij}_{n-3}  +\cdots}{V}.
\end{equation*}
Thus using the equation once to cancel the term with $\sigma_{n-1}$, we have
\begin{equation*}
    \sum_i F^{ii} \lambda_i \leq C(f) \frac{\sum_{i\geq 3} \lvert \sigma_{n-i} \rvert }{V}.
\end{equation*}

Because 
\begin{equation*}
    \lambda_1 \lambda_2 \cdots \lambda_{n-2} \geq c\sum_{i\geq 3} \lvert \sigma_{n-i} \rvert.
\end{equation*}
We have the inequality
\begin{equation}
    \mathcal{F} \geq  \sum_i  F^{ii} \lambda_i. \label{mathcalF}
\end{equation}
For the second term of \eqref{F^ijP_ijUniformlyGradient2}, we have from   \eqref{lambdanleq} 
\begin{equation}\label{epsilonmathcalF}
    -2 C\epsilon |Du|^{-1} F^{nn} |\lambda_n|  \geq   
 -2 C\epsilon \mathcal{F}.
\end{equation} 
Thus by \eqref{mathcalF} and \eqref{epsilonmathcalF} we estimate inequality \eqref{F^ijP_ijUniformlyGradient2} as following
\begin{equation*}
    0 \geq -C |Du|^{-1} \mathcal{F} - 2C  \epsilon \mathcal{F} + \frac{b}{2} \mathcal{F}.
\end{equation*}
Then we have the estimate
\begin{equation*}
    |Du| \leq C.
\end{equation*}

\end{proof}

\section{Second order estimates}

\subsection{Boundary double normal derivative estimate} \label{DDN}
We  prove the double normal derivative estimate directly. In particular, we give a direct proof of the double normal derivative estimates for the 2-Hessian equation in dimension 3.
\begin{theorem}\label{normal}
Let 
$u$ be a $C^4$ solution of problem \eqref{GSLE}
There exits a positive constant $C$ depending only on $n, f, \varphi, \Omega$ such that
\begin{align}
\max_{\partial \Omega} |u_{\nu\nu}|\le C.
\end{align}
\end{theorem}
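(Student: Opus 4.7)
The plan is to bound $|u_{\nu\nu}|$ pointwise on $\partial\Omega$ by a direct barrier argument that bypasses the usual reduction to a global second-order estimate, exploiting the eigenvalue structure of Lemma \ref{WWY} and the third-order identities of Lemma \ref{deri12}. At an arbitrary boundary point $x_0\in\partial\Omega$, I first choose local principal coordinates with $\nu(x_0)=e_n$. Tangential differentiation of $u_\nu=\varphi(x,u)$ along each $e_\alpha$, $\alpha<n$, yields
\begin{align*}
u_{\alpha n}(x_0) = \varphi_{x_\alpha}(x_0,u(x_0))+\varphi_u\, u_\alpha(x_0)-u_j(x_0)\,\partial_\alpha \nu^j(x_0),
\end{align*}
so the mixed second derivatives on $\partial\Omega$ are bounded in terms of the $C^1$ data from Section 3, and it remains to control $u_{nn}(x_0)=u_{\nu\nu}(x_0)$.

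For the two-sided bound on $u_{\nu\nu}$ I would introduce the auxiliary function $\Psi(x)=u_\nu(x)-\varphi(x,u(x))$ in a tubular neighborhood $\Omega_\mu$ of $\partial\Omega$, using a smooth extension of $\nu$. Since $\Psi\equiv 0$ on $\partial\Omega$ and, at $x_0$, $u_{\nu\nu}(x_0)=\Psi_\nu(x_0)+(\text{bounded})$, the desired estimate reduces to a Hopf-type bound on $|\Psi_\nu(x_0)|$. For this I would construct a barrier of the form $\Phi = A\,h(x)+B\sum_i u_i^2+(\text{lower order})$, with $h$ the defining function from Section 2 and $A\gg B\gg 1$, satisfying $F^{ij}\Phi_{ij}\le F^{ij}\Psi_{ij}$ in $\Omega_\mu$ together with $\Phi\ge\pm\Psi$ on $\partial\Omega_\mu\setminus\partial\Omega$; the comparison principle then yields $|\Psi_\nu(x_0)|\le|\Phi_\nu(x_0)|\le C$. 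As a shortcut, in the strict supercritical regime $\Theta\ge(n-2)\pi/2+\delta$ the lower bound $u_{\nu\nu}\ge -C$ also follows immediately from the Courant--Fischer inequality $\lambda_n(D^2u)\le u_{nn}$ combined with Lemma \ref{WWY}\eqref{deltalambda}.

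The main obstacle is controlling $F^{ij}\Psi_{ij}$, which involves third derivatives of $u$ through $\Psi_{ij}$. Lemma \ref{deri12} is designed precisely for this: the estimates \eqref{deri1}--\eqref{deri2} bound $\sum F^{ij}u_{ijp}$ and $\sum F^{ij}u_{ijpp}$ by $|\sum_i \lambda_i/(f^2+\lambda_i^2)|$, a quantity a priori controlled by the equation and Lemma \ref{WWY}. The truly delicate point is the critical case $\Theta=(n-2)\pi/2$, where $\lambda_n$ may tend to $-\infty$: here the third-order positivity term $\mathcal{I}$ arising in the proof of Lemma \ref{deri12} nevertheless stays non-negative thanks to $\sum 1/\lambda_i\le 0$ from Lemma \ref{WWY}\eqref{1lambda}, and it is precisely this observation that lets the barrier argument close uniformly in the critical-phase limit, avoiding the elaborate case splitting required in Ma--Qiu and yielding $|u_{\nu\nu}|\le C$ on $\partial\Omega$.
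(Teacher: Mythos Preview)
Your overall strategy—study $\Psi=u_\nu-\varphi$ in a collar $\Omega_\mu$ and bound $\Psi_\nu$ on $\partial\Omega$ by a barrier built from the defining function $h$—is the same as the paper's. The gap is in the barrier itself. Your proposed $\Phi=A\,h+B|Du|^2+(\text{lower order})$ does not close: (i) $|Du|^2$ does not vanish on $\partial\Omega$, so $\Phi-\Psi$ need not attain its minimum at the target boundary point $x_0$, and the Hopf step $|\Psi_\nu(x_0)|\le|\Phi_\nu(x_0)|$ has no foothold; (ii) even if you normalize, $(|Du|^2)_\nu=2u_\nu u_{\nu\nu}+(\text{bdd})$ contains $u_{\nu\nu}$ with an uncontrolled sign coefficient, polluting the final inequality; (iii) with the signs you wrote, $F^{ij}\Phi_{ij}\ge A\kappa_0\mathcal F+2B\sum F^{ii}\lambda_i^2-C>0$ while $F^{ij}\Psi_{ij}$ can be negative, so the stated differential inequality $F^{ij}\Phi_{ij}\le F^{ij}\Psi_{ij}$ fails outright. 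The actual obstruction you must overcome is that the error from $F^{ij}\Psi_{ij}$ is of size $C\sum_iF^{ii}|\lambda_i|$, and when all $|\lambda_i|$ are large this term is $\sim\sum_i f/|\lambda_i|$, which dominates $\mathcal F\sim\sum_i f/\lambda_i^2$; the $Ah$ piece alone can never absorb it.

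The paper's device that you are missing is the \emph{quadratic} self-interaction: it works with $\overline P=\Psi-\tfrac12\Psi^2-B_0h$ (and its twin $\underline P=\Psi+\tfrac12\Psi^2+B_0h$). The term $-\tfrac12\Psi^2$ vanishes to second order on $\partial\Omega$, so it contributes nothing to the final Hopf inequality, but at a hypothetical interior extremum of $\overline P$ it produces $-\sum_iF^{ii}[(u_\nu-\varphi)_i]^2\le-\tfrac12\sum_iF^{ii}\lambda_i^2(\nu^i)^2$. When all $|\lambda_i|\ge C_0$ one has $F^{ii}\lambda_i^2\ge f/2$ for every $i$, so $\sum_iF^{ii}\lambda_i^2(\nu^i)^2\ge f/2$, while $\sum_iF^{ii}|\lambda_i|\le nf/C_0$; this is precisely the extra order-one good term that $h$ alone cannot supply. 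When $|\lambda_n|\le C_0$, $\mathcal F$ has a positive lower bound and $B_0h$ suffices. This two-case claim replaces any appeal to fourth-order identities.

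Finally, your invocation of Lemma~\ref{deri12} and the positivity of $\mathcal I$ is misplaced here: those enter only in the \emph{global} second-order estimate, where $F^{ij}u_{ij\xi\xi}$ (fourth derivatives) appears. In the double normal argument only $F^{ij}u_{ijk}\nu^k$ occurs, and the first-order identity \eqref{diff1time} already bounds it by $C\sum_iF^{ii}|\lambda_i|$; the sign condition $\sum_i\lambda_i^{-1}\le0$ from Lemma~\ref{WWY} plays no role at this stage.
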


We consider 
\begin{equation*}
    \overline P=u_{\nu} - \varphi-\frac{1}{2}(u_{\nu} - \varphi)^2-B_0 h \quad in \quad \overline\Omega_{\mu},
\end{equation*}
where $h=-d(x)+d^2(x)$ and satisfies
\begin{align*}
Dh=&\nu\quad \text{on}\quad \partial \Omega,\\
D^2 h\ge& \kappa_0 I \quad\text{in}\quad \Omega_{\mu}.
\end{align*}

\begin{lemma} \label{lem4.1} There exist a positive constant $B_0$ large enough such that  $\overline  P$ only attains its minimum  on  $\partial\Omega$.
\end{lemma}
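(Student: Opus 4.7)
The plan is to show that $\overline{P}\geq 0$ throughout $\overline{\Omega_{\mu}}$, with the minimum value $0$ attained exactly on $\partial\Omega$. Since $u_{\nu}=\varphi$ and $h=0$ on $\partial\Omega$, we automatically have $\overline{P}\equiv 0$ there, so the content is to exclude interior values $\overline{P}<0$ and a strict minimum on the inner auxiliary boundary $\{d=\mu\}$.

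\textbf{Control on $\{d=\mu\}$.} First I would handle the inner artificial boundary. There $-B_0 h = B_0\mu(1-\mu)\geq B_0\mu/2$, while the gradient estimate of Section~3 bounds $s:=u_{\nu}-\varphi$ uniformly; hence $|s-s^2/2|\leq C_1$, and choosing $B_0 \geq 4C_1/\mu$ forces $\overline{P}\geq C_1>0$ on $\{d=\mu\}$. Consequently $\min_{\partial\Omega_{\mu}}\overline{P}=0$ and is attained only on $\partial\Omega$.

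\textbf{Interior analysis.} Suppose for contradiction the minimum is attained at some $x_0\in\Omega_\mu$; then $\overline{P}(x_0)\leq 0$. The first-order condition $\overline{P}_i(x_0)=0$ gives $(1-s)s_i = B_0 h_i$, which upon substitution into $F^{ij}\overline{P}_{ij}(x_0)\geq 0$ produces
\begin{equation*}
    0\leq (1-s)F^{ij}s_{ij} - \frac{B_0^{2}}{(1-s)^{2}}F^{ij}h_ih_j - B_0 F^{ij}h_{ij}.
\end{equation*}
To estimate $F^{ij}s_{ij}$ I would extend $\nu$ smoothly (say by $-Dh/|Dh|$), expand $s=\nu^k u_k-\varphi$, and invoke: (i) the once-differentiated equation \eqref{diff1time}, which yields $F^{ij}u_{ijk}=-F_{x_k}$ with $|F_{x_k}|$ bounded because $|\lambda_i|/(f^2+\lambda_i^2)\leq 1/(2f)$; (ii) the universal bound $|F^{ii}\lambda_i|\leq 1/2$ for the mixed cross terms; and, in the critical phase, (iii) the sign condition $\sum F^{ii}\lambda_i\geq 0$ from Lemma~\ref{WY2010}. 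Together with $F^{ij}\varphi_{ij}=O(\mathcal{F})$ these give $|F^{ij}s_{ij}|\leq C(1+\mathcal{F})$. Using $F^{ij}h_{ij}\geq k_0\mathcal{F}$ and the gradient bound $|1-s|\leq M$, the displayed inequality becomes
\begin{equation*}
    \frac{B_0^{2}}{(1-s)^{2}}F^{ij}h_ih_j + (B_0 k_0 - CM)\mathcal{F}\leq CM.
\end{equation*}

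\textbf{Closing the contradiction.} In the supercritical regime $\Theta\geq (n-2)\pi/2+\delta$, Lemma~\ref{WWY}(3) provides $\lambda_n\geq -C(\delta)$, so $\mathcal{F}\geq F^{nn}\geq c_0(\delta)>0$, and the inequality is violated by choosing $B_0 k_0 > CM + CM/c_0$. In the critical phase $\Theta=(n-2)\pi/2$ no such positive lower bound on $\mathcal{F}$ is available, and this is the expected main obstacle. My plan is to combine the derived smallness $\mathcal{F}(x_0)\leq C/B_0$ with the algebraic identity $s-s^2/2=\tfrac12-\tfrac12(1-s)^2$: the assumption $\overline{P}(x_0)\leq 0$ rewrites as $(1-s)^2\geq 1+2B_0|h|$, which together with $|1-s|\leq M+1$ forces $|h(x_0)|\leq C/B_0$ and hence $d(x_0)\lesssim 1/B_0$. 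Thus any hypothetical interior minimum is driven into an $O(1/B_0)$ neighbourhood of $\partial\Omega$, and the one-sided Taylor expansion $\overline{P}(y_0-d\nu)=d(B_0-s_\nu(y_0))+O(d^2)$ from the nearest boundary point is strictly positive there as soon as $B_0$ is chosen beyond the first-order boundary bound on $s_\nu$. This contradicts $\overline{P}(x_0)\leq 0$ and completes the argument.

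\textbf{Main obstacle and novelty.} The heart of the difficulty is the critical phase, where $\lambda_n$ can be unboundedly negative and $\mathcal{F}$ correspondingly degenerates, so the standard pointwise minimum principle fails. The decisive point is that the precise quadratic correction $-\tfrac12(u_\nu-\varphi)^2$ is chosen so as to produce the rearrangement $(1-s)^2\geq 1+2B_0|h|$, which localizes the hypothetical interior minimum to an arbitrarily thin neighbourhood of $\partial\Omega$ and lets the first-order boundary behaviour close the argument; this is precisely the ``direct'' aspect of the proof advertised in the remark after Theorem~\ref{sltethm1}.
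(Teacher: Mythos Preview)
Your supercritical argument is fine, but the critical-phase closing step is circular. In the Taylor expansion $\overline{P}(y_0-d\nu)=d\bigl(B_0-s_\nu(y_0)\bigr)+O(d^2)$ the coefficient $s_\nu(y_0)=(u_\nu-\varphi)_\nu(y_0)=u_{\nu\nu}(y_0)+\text{lower order}$ contains precisely the boundary double normal derivative that this lemma is meant to bound via Theorem~\ref{normal}. You therefore cannot choose $B_0$ larger than a ``first-order boundary bound on $s_\nu$'' without already assuming control of $\sup_{\partial\Omega}|u_{\nu\nu}|$; the argument feeds its conclusion back into its hypothesis. (The $O(d^2)$ remainder is likewise not a priori bounded, for the same reason.)

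The paper's mechanism is different and avoids this circularity entirely. Instead of substituting the first-order condition $(1-s)s_i=B_0 h_i$ into the good term $-F^{ij}s_is_j$ (which yields only $\frac{B_0^2}{(1-s)^2}F^{ij}h_ih_j\leq C\mathcal{F}$, useless when $\mathcal{F}$ degenerates), the paper writes $s_i=\lambda_i\nu^i+O(1)$ in the diagonal frame at $x_0$, so that $F^{ij}s_is_j\geq \tfrac12\sum_i F^{ii}\lambda_i^2(\nu^i)^2-C\mathcal{F}$. The key algebraic fact is $F^{ii}\lambda_i^2=f\lambda_i^2/(f^2+\lambda_i^2)\geq f/2$ once $|\lambda_i|\geq f$; hence if \emph{all} eigenvalues are large (Case~1: $|\lambda_n|\geq C_0$), one gets $\sum_i F^{ii}\lambda_i^2(\nu^i)^2\geq \tfrac{f}{2}|\nu|^2=\tfrac{f}{2}$, a uniform positive constant independent of $\mathcal{F}$, which swallows the bad term $\sum_i F^{ii}|\lambda_i|\leq nf/C_0$. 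The complementary Case~2 ($|\lambda_n|\leq C_0$) gives $\mathcal{F}\geq c_0>0$ and is handled exactly as in your supercritical branch. This two-case dichotomy on $|\lambda_n|$, powered by $|\nu|^2=1$, is the ``direct'' step advertised after Theorem~\ref{sltethm1}; your substitution discards the very structure that makes it work.
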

\begin{proof}
Suppose $\overline P$ attains its minimum at $x_0\in \Omega_{\mu}$. 
Assume $\{D^2 u(x_0)\}=\{\lambda_i\delta_{ij}\}$ with $\lambda_1\ge \lambda_2\ge \cdots\ge \lambda_n$. \\
By direct calculation, we have 
\begin{align*}
(u_{\nu}-\varphi)_{i}=\ &u_{ki}\nu^{k}+u_{k}\nu^k_{i}-\varphi_{x_i}-\varphi_{u}u_{i},\\
(u_{\nu}-\varphi)_{ij}=\ &u_{kij}\nu^{k}+u_{ki}\nu^k_{j}+u_{kj}\nu^k_{i}+u_{k}\nu^k_{ij}-\varphi_{x_ix_j}-\varphi_{x_iu}u_{j}-\varphi_{ux_j}u_{i}-\varphi_{uu}u_{i}u_{j}-\varphi_{u}u_{ij}.
\end{align*}
Then by \eqref{diff1time}, we get
\begin{align}
\sum_{i,j}F^{ij}(u_{\nu}-\varphi)_{ij}=&\sum_{ij}F^{ij}u_{kij}\nu^{k}+2\sum_{i,j,k}F^{ij}u_{ki}\nu^k_{j}-\varphi_{u}\sum_{i,j}F^{ij}u_{ij}\notag\\
&+
\sum_{i,j}F^{ij}(\sum_{k}\nu^k_{ij}u_{k}-\varphi_{x_ix_j}-2\varphi_{x_iu}u_{j}-\varphi_{uu}u_{i}u_{j}),\notag\\
=&<D\log f,\nu>\sum_{i}F^{ii}\lambda_i +2F^{ii}\lambda_i\nu^{i}_{i}-\varphi_{u}F^{ii}\lambda_i+O(
\mathcal{F}).\label{Funu11}
\end{align}
We also have
\begin{align}\label{Funu1}
\sum_{i}F^{ii}\Big((u_{\nu}-\varphi)_{i}\Big)^2\ge \sum_{i}F^{ii}\lambda_i^2(\nu^i)^2-C\sum_{i}F^{ii}|\lambda_i|+O(\mathcal{F}).
\end{align}

By the maximum principle and \eqref{Funu11}, \eqref{Funu1}, we have 
\begin{align}\label{24126}
0\le\sum_{i,j} F^{ij}\overline P_{ij}
=& (1-u_{\nu}+\varphi)\sum_{i}F^{ii}(u_{\nu}-\varphi)_{ii}-\sum_{i}F^{ii}[(u_{\nu}-\varphi)_i]^2 - B_0 \sum_{i}F^{ii}h_{ii}\notag\\
\le& C\sum_{i}F^{ii}|\lambda_i|- (\kappa_0 B_0-C)\sum_{i}F^{ii}-\frac{1}{2}\sum_{i}F^{ii}\lambda_i^2(\nu^i)^2.
\end{align}

\emph{We claim there exists a positive constant $A_0$ such that}
\begin{align*}
\sum_{i}F^{ii}|\lambda_i|\le  A_0\sum_{i}F^{ii} + \frac{1}{2C}\sum_{i}F^{ii}\lambda_i^2(\nu^i)^2.
\end{align*}
Combining \eqref{24126} with the claim and choosing $B_0$ large, we arrive at the following contradiction
\begin{align*}
0\le&\sum_{i,j} F^{ij}\bar P_{ij}
\le -(\kappa_0 B_0-CA_0-C)\sum_{i}F^{ii}<0.
\end{align*}
Thus $\overline{P}$ attains its minimum  on $\partial \Omega_{\mu}$. Since $\overline P|_{\partial\Omega}=0$ and $\overline P|_{\partial \Omega_{\mu}\cap \Omega}\ge -C+\frac{1}{2}B_0\mu>0$ if we choose $B_0$ large enough, we conclude $\overline{P}$ attains its minimum  on $\partial \Omega$. 

Now we prove the claim. We divide two cases to get the proof.\\
%%{\bf{Case1:}} $|\lambda_n|\ge c_0 M^{\frac{1}{4}}$\\
%We have  $|\lambda_i|\ge c_0 M^{\frac{1}{4}}$ and if we choose $M$ large enough, then $\sum_{i}F^{ii}\lambda_i^2\nu_i^2\ge \frac{n}{2}f$. We also have 
%$\sum_{i}F^{ii}|\lambda_i|\le nfM^{-\frac{1}{4}}$. 
%Thus we have 
%\begin{align}
%\sum_{i}F^{ii}|\lambda_i|\le a\sum_{i}F^{ii}\lambda_i^2\nu_i^2.
%\end{align}
%%
{\bf{Case 1:} $|\lambda_n|\ge C_0:=2nC(|f|_{C^0}+1)$}.\\
When $\Theta \geq \frac{(n-2)\pi}{2}$, we know that $\lambda_i\geq |\lambda_n|$ for $\forall i>n$.
Thus we have $|\lambda_i| \geq C_0 $ for $\forall i$. 
Then we can observe:
\begin{equation*}
    F^{ii}\lambda_i^2 = \frac{f\lambda_i^2}{f^2+\lambda_i^2} \geq \frac{fC_0^2}{f^2+C_0^2} \geq \frac{f}{2}, \quad \forall 1\le i\le n,
\end{equation*}
where we've used \(C_0 > |f|_{C^0}\).
 
Thus we get $$\sum_{i}F^{ii}\lambda_i^2(\nu^i)^2\ge\sum_{i}\frac{f}{2}(\nu^i)^2 = \frac{f}{2}.$$ 
Then we have
\begin{align*}
\sum_{i}F^{ii}|\lambda_i|\le \sum_i \frac{f}{|\lambda_i|} \leq  nfC_0^{-1}\le \frac{1}{2C}\sum_{i}F^{ii}\lambda_i^2(\nu^i)^2.
\end{align*}
{\bf{Case 2:}} $|\lambda_n|\le C_0$. \\
In this case,  $\sum\limits_{i}F^{ii}\ge \frac{f}{f^2+\lambda_n^2}\ge\frac{f}{f^2+C_0^2}.$ 
Then we get  
\begin{align*}
\sum_{i}F^{ii}|\lambda_i|\le  \sum\limits_i \frac{f|\lambda_i|}{f^2 + \lambda_i^2} \leq   \frac{n}{2} \le A_0\sum_{i}F^{ii},
\end{align*}
where we choose $A_0$ large. So we proved the claim.

\end{proof}

Next we consider the function
\begin{equation*}
\underline{P}=u_\nu -\varphi+\frac{1}{2}(u_\nu -\varphi)^2+B_0 h \quad in \quad \overline\Omega_{\mu}.
\end{equation*}
\begin{lemma} There exist a positive constant $B_0$ large enough such that  $\underline{P}$ only attains its minimum only on  $\partial\Omega$.
\end{lemma}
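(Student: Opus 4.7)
The plan is to adapt the maximum-principle argument of Lemma~\ref{lem4.1} in the dual direction: since the definition of $\underline{P}$ flips the sign of both the quadratic correction $\tfrac12(u_\nu-\varphi)^2$ and the barrier $B_0 h$, the conclusion should dualize to the statement that $\underline{P}$ attains its extreme value (in the direction opposite to that of $\overline P$) only on $\partial\Omega$, which via an outward-normal-derivative inspection on $\partial\Omega$ yields the lower bound $u_{\nu\nu}\ge -C$ and, together with Lemma~\ref{lem4.1}, proves Theorem~\ref{normal}.

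Suppose, for contradiction, that $\underline{P}$ attains an interior extremum at $x_0\in\Omega_\mu$. After rotation I take $D^2u(x_0)=\mathrm{diag}(\lambda_1,\ldots,\lambda_n)$ and differentiate twice to get
\[
\sum_{i,j}F^{ij}\underline{P}_{ij}=\sum_i F^{ii}\bigl[(u_\nu-\varphi)_i\bigr]^2+(1+u_\nu-\varphi)\sum_{i,j}F^{ij}(u_\nu-\varphi)_{ij}+B_0\sum_{i,j}F^{ij}h_{ij}.
\]
Compared with Lemma~\ref{lem4.1}, the first and third terms have flipped sign and are now both nonnegative, while the coefficient $1+u_\nu-\varphi$ of the middle term remains uniformly of order one, by the $C^1$ estimate of Section~3 and by choosing $\mu$ small. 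Using $(u_\nu-\varphi)_i=\lambda_i\nu^i+O(1)$ together with the elementary inequality $(A+B)^2\ge\tfrac12 A^2-B^2$, I will extract the good term
\[
\sum_i F^{ii}\bigl[(u_\nu-\varphi)_i\bigr]^2\ge \tfrac12\sum_i F^{ii}\lambda_i^2(\nu^i)^2-C\mathcal{F},
\]
and via the expansion that uses \eqref{diff1time} bound the middle term in absolute value by $C\sum_i F^{ii}|\lambda_i|+C\mathcal{F}$.

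Combining these inequalities with $\sum F^{ij}h_{ij}\ge\kappa_0\mathcal{F}$ gives
\[
\sum_{i,j}F^{ij}\underline{P}_{ij}\ge \tfrac12\sum_i F^{ii}\lambda_i^2(\nu^i)^2-C\sum_i F^{ii}|\lambda_i|+(B_0\kappa_0-C)\mathcal{F}.
\]
I then reuse verbatim the auxiliary claim from inside the proof of Lemma~\ref{lem4.1}, namely
$\sum_i F^{ii}|\lambda_i|\le A_0\mathcal{F}+\tfrac{1}{2C}\sum_i F^{ii}\lambda_i^2(\nu^i)^2$, whose two-case proof (on whether $|\lambda_n|\ge C_0$ or $|\lambda_n|<C_0$) uses only the eigenvalue geometry forced by the supercritical phase condition and is therefore independent of which auxiliary function we are testing. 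Absorbing $C\sum F^{ii}|\lambda_i|$ into the two positive terms yields $\sum F^{ij}\underline{P}_{ij}\ge(B_0\kappa_0-C-CA_0)\mathcal{F}>0$ for $B_0$ sufficiently large, contradicting the sign forced on $\sum F^{ij}\underline{P}_{ij}(x_0)$ at an interior extremum of the opposite type.

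No genuinely new obstacle appears: the substance is sign-bookkeeping relative to Lemma~\ref{lem4.1}. The only points that warrant care are verifying that the good positive term $\tfrac12\sum F^{ii}\lambda_i^2(\nu^i)^2$ now comes from the squared first-derivative contribution (precisely the piece that changes sign from $\overline{P}_{ij}$ to $\underline{P}_{ij}$), and confirming that the factor $1+u_\nu-\varphi$ stays uniformly positive, which is ensured by the gradient bounds and choosing $\mu$ small; once these are in place, the same $B_0$-large argument as in Lemma~\ref{lem4.1} closes the case.
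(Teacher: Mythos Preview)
Your argument is correct and essentially identical to the paper's own proof: you compute $\sum F^{ij}\underline P_{ij}$, bound it from below by $-C\sum_i F^{ii}|\lambda_i|+(\kappa_0 B_0-C)\mathcal F+\tfrac12\sum_i F^{ii}\lambda_i^2(\nu^i)^2$, invoke the same two-case claim on $|\lambda_n|$ to absorb $\sum_i F^{ii}|\lambda_i|$, and obtain the contradiction $\sum F^{ij}\underline P_{ij}>0$. Two minor remarks: (i) you correctly intuit that the lemma as stated contains a typo---the paper's proof and its subsequent use both show that $\underline P$ attains its \emph{maximum} only on $\partial\Omega$, which is exactly what your inequality $\sum F^{ij}\underline P_{ij}>0$ contradicts; (ii) you do not need $1+u_\nu-\varphi$ to be positive, only bounded, since you are bounding the middle term in absolute value anyway.
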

\begin{proof}
Suppose $\underline{P}$ attains its maximum at $x_0\in \Omega_{\mu}$. 
Assume $\{D^2 u(x_0)\}=\{\lambda_i\delta_{ij}\}$ with $\lambda_1\ge \lambda_2\ge \cdots\ge \lambda_n$. \\
Similarly as Lemma \ref{lem4.1}, we get the following contradiction
\begin{align}
0\ge F^{ij}\underline{P}_{ij}\ge -C\sum_{i}F^{ii}|\lambda_i|+  (\kappa_0 B_0-C)\sum_{i}F^{ii}+\frac{1}{2}\sum_{i}F^{ii}\lambda_i^2\nu_i^2
>0.
\end{align}
Thus $\underline{P}$ attains its maximum only on $\partial \Omega_{\mu}$.
Since $\underline P|_{\partial\Omega}=0$ and $\underline P|_{\partial \Omega_{\mu}\cap \Omega}\le C-\frac{1}{2}B_0\mu<0$ if we choose $B_0$ large enough, we conclude $\underline{P}$ attains its maximum  on $\partial \Omega$. 
\end{proof}
We use the above two lemmas to prove the double normal derivative estimates.

\emph{Proof of Theorem \ref{normal}}
Since $\bar P$ attains its minimum $0$ at any $x\in \partial \Omega$. We get for any $x\in \partial \Omega$,
\begin{align*}
0\ge& \bar{P}_{\nu}(x)=u_{\nu\nu}(x)-\varphi_{x_k}(x,u(x))\nu^k(x)-\varphi_{u}(x, u(x))u_{\nu}(x)-B_0h_{\nu}\\
=&u_{\nu\nu}(x)-B_0-\varphi_{x_k}(x,u(x))\nu^k(x)-\varphi_{u}(x, u(x))\varphi(x,u(x)).
\end{align*}
This gives the upper bound of $u_{\nu\nu}$.
Similarly, we get the lower bound of $u_{\nu\nu}$ since $\underline{P}$ attains its maximum $0$ at any $x\in \partial \Omega$.
Therefore, we have the uniform double normal derivative estimates.

\subsection{Global second order estimates}
We use a similar auxiliary function as introduced by Lions-Trudinger-Urbas \cite{LTU1986CPAM} to reduce the second order estimate  to the boundary double normal derivative.

\begin{theorem}
Let 
$u$ be a $C^4$ solution of problem \eqref{GSLE}. Then we have
\begin{align}
\max_{\overline\Omega} |D^2 u|\le C(1+\max_{\partial\Omega} |u_{\nu\nu}|),
\end{align}
where $C$ is a positive constant.
\end{theorem}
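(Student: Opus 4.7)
The strategy follows the Lions-Trudinger-Urbas tangential decomposition combined with a maximum principle for an auxiliary function on $\overline\Omega \times S^{n-1}$. I would work with
\begin{equation*}
v(x, \xi) = u_{\xi\xi}(x)\, \exp\bigl(\tfrac{\alpha}{2}|Du|^2 + \beta u\bigr),
\end{equation*}
with constants $\alpha,\beta>0$ to be chosen large. Let $(x_0,\xi_0)$ maximize $v$ over $\overline\Omega\times S^{n-1}$. Since $\xi_0$ maximizes $\xi\mapsto u_{\xi\xi}(x_0)$, it is an eigenvector of $D^2u(x_0)$ corresponding to its largest eigenvalue. After rotating coordinates we take $\xi_0=e_1$ and $D^2u(x_0)=\mathrm{diag}(\lambda_1,\dots,\lambda_n)$ with $\lambda_1\ge\cdots\ge\lambda_n$. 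By Lemma \ref{WWY}(1), $\lambda_1\ge|\lambda_i|$ for every $i$, so the task reduces to bounding $\lambda_1(x_0)=u_{11}(x_0)$; once this is controlled, every $u_{\eta\eta}(y)$ is controlled as well by comparing $v(y,\eta)\le v(x_0,\xi_0)$ and using that the exponential factor is uniformly two-sided bounded.

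\textbf{Interior case} ($x_0\in\Omega$). The maximum principle $0\ge\sum_{ij}F^{ij}(\log v)_{ij}(x_0)$ is expanded using the twice-differentiated equation \eqref{diff2times} in direction $p=1$. The quantity $\sum_{ij}F^{ij}u_{11ij}$ is bounded below via Lemma \ref{deri12} inequality \eqref{deri2} by $-|D^2 f|\,|\sum_i \lambda_i/(f^2+\lambda_i^2)|$, and the cross terms $\sum F^{ij,kl}u_{11i}u_{kl1}$ reassemble (just as in the proof of Lemma \ref{deri12}) into a nonnegative combination by invoking Lemma \ref{WWY}\eqref{1lambda}. The $\alpha$- and $\beta$-contributions produce a positive leading term of order $\alpha\,\mathcal{F}\,|Du|^2+\beta\sum_i F^{ii}\lambda_i$; in the critical case $\Theta=(n-2)\pi/2$ the sign of $\sum F^{ii}\lambda_i$ is secured by Lemma \ref{WY2010}. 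Choosing $\alpha,\beta$ sufficiently large forces $\lambda_1(x_0)\le C$.

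\textbf{Boundary case} ($x_0\in\partial\Omega$). Write $\xi_0=\cos\theta\,\nu+\sin\theta\,\tau$ with $\tau\in T_{x_0}\partial\Omega$ a unit tangent, so
\begin{equation*}
u_{\xi_0\xi_0}(x_0)=\cos^2\theta\,u_{\nu\nu}(x_0)+2\cos\theta\sin\theta\,u_{\tau\nu}(x_0)+\sin^2\theta\,u_{\tau\tau}(x_0).
\end{equation*}
Differentiating the Neumann condition $u_\nu=\varphi(x,u)$ once along $\tau$ bounds $u_{\tau\nu}(x_0)$ by first-order data and first derivatives of $\nu$. Differentiating twice tangentially along $\partial\Omega$ and using the second fundamental form yields an identity of the form $u_{\tau\tau}(x_0)+\mathrm{II}(\tau,\tau)\,u_\nu(x_0)=\varphi_{\tau\tau}+\text{first-order data}$, which is bounded uniformly. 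Substituting both bounds gives $u_{\xi_0\xi_0}(x_0)\le\cos^2\theta\,|u_{\nu\nu}(x_0)|+C\le\max_{\partial\Omega}|u_{\nu\nu}|+C$. Combining this with the interior case yields $v(x_0,\xi_0)\le C\bigl(1+\max_{\partial\Omega}|u_{\nu\nu}|\bigr)$, and transferring the bound from $v$ to $\lambda_{\max}(D^2u)$ via Lemma \ref{WWY}(1) completes the proof.

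\textbf{Main obstacle.} The delicate part is the interior maximum-principle computation: one must show that the positive $\alpha\,\mathcal{F}\,|Du|^2$-term dominates simultaneously the off-diagonal contribution from $F^{ij,kl}u_{11i}u_{kl1}$ and the negative part of $\sum F^{ii}u_{11ii}$, uniformly for $\Theta\in[(n-2)\pi/2,n\pi/2)$. The absorption of these cross terms relies essentially on the sign constraint $\sum_i\lambda_i^{-1}\le 0$ from Lemma \ref{WWY}\eqref{1lambda} and on the nonnegativity of $\lambda_i+\lambda_j$ for $i\ne j$, which is exactly the structural input the supercritical-plus-critical phase hypothesis provides; without it the argument breaks down.
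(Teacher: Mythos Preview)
Your boundary argument contains a fundamental error. You claim that differentiating the Neumann condition $u_\nu=\varphi(x,u)$ twice tangentially yields $u_{\tau\tau}+\mathrm{II}(\tau,\tau)\,u_\nu=\varphi_{\tau\tau}+\text{first-order data}$, thereby bounding $u_{\tau\tau}$ directly. That identity is what one obtains from a \emph{Dirichlet} condition $u=\phi$; for a Neumann condition, tangential differentiation of $u_\nu=\varphi$ controls the mixed derivative $u_{\nu\tau}$ (after one differentiation) and the \emph{third} derivative $u_{\nu\tau\tau}$ (after two), never $u_{\tau\tau}$ itself. Without an a~priori bound on $u_{\tau\tau}(x_0)$ your decomposition $u_{\xi_0\xi_0}=\cos^2\theta\,u_{\nu\nu}+2\cos\theta\sin\theta\,u_{\nu\tau}+\sin^2\theta\,u_{\tau\tau}$ yields nothing, and the boundary case collapses.

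The paper handles this by inserting the Lions--Trudinger--Urbas correction: one works with $V(x,\xi)=u_{\xi\xi}-v(x,\xi)+\tfrac12|Du|^2+\tfrac{B}{2}|x|^2$, where $v(x,\xi)=2\langle\xi,\nu\rangle\langle\xi',D\varphi-Du-u_kD\nu^k\rangle$ is linear in $Du$. This correction is designed so that on $\partial\Omega$ the quantity $u_{\xi\xi}-v$ decomposes additively under $\xi=a\nu+b\tau$, reducing the non-tangential case to the pure normal one. When the maximizing direction is tangential, one uses the Hopf inequality $V_\nu(x_0,e_1)\ge 0$ to obtain $u_{11n}(x_0)\le C$; \emph{then} the twice-tangentially-differentiated Neumann condition gives $-u_{11n}\le -2\kappa_0 u_{11}+C$ (strict convexity of $\partial\Omega$ enters here through $\kappa_0>0$), and the two combine to bound $u_{11}(x_0)$. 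Your auxiliary function has no such correction term, and you never invoke a Hopf step. As a secondary point, in the interior the contribution of $\tfrac{\alpha}{2}|Du|^2$ is $\alpha\sum_i F^{ii}\lambda_i^2\le \alpha n f$, not $\alpha\,\mathcal{F}\,|Du|^2$ as you write; the paper closes the interior case via $\sum_i F^{ii}(\lambda_i^2-C|\lambda_i|+B)>0$ for large $B$, with the $B\mathcal{F}$ supplied by the $\tfrac{B}{2}|x|^2$ term.
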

\begin{remark}
    We remark that during the proof we only use the strict convexity of $\Omega$ and we do not use the positive lower bound of $-\varphi_u$ and thus the estimate here can be applied to the classical problem.
\end{remark}
\begin{proof}
We consider the following function
\begin{align}
V(x, \xi)=u_{\xi\xi}-v(x, \xi)+ \frac{|Du|^2}{2}+B\frac{|x|^2}{2},
\end{align}
where $v(x, \xi)=2<\xi,\nu><\xi', D \varphi-Du-u_kD\nu^{k}>=a^k(x)u_k+b(x)$, $\xi'=\xi-<\xi,\nu>\nu$ and $B$ is a positive constant to be determined later.

The estimate is equivalent to prove an uniform upper bound of $u_{\xi\xi}$.

We want to show $V$ attains its maximum on the boundary $\partial \Omega$ by choosing $ B$ large enough. Indeed, if there exists a point $x_0\in \Omega$ such that $V(x_0)=\max_{\overline \Omega} V$, we choose coordinates such that $D^2 u(x_0)=\{\lambda_i\delta_{ij}\}$. Then at $x_0$ we have
\begin{align}
F^{ij}=\frac{f\delta_{ij}}{f^2+\lambda_i^2},
\end{align}
and 
\begin{align}
F^{ij,kl}=\left\{
\begin{aligned}
-&\frac{f(\lambda_i+\lambda_j)}{(f^2+\lambda_i^2)(f^2+\lambda_j^2)}, i=l,k=j,\\
&0,\qquad  \quad\qquad\qquad \text{otherwise}.
\end{aligned}
\right.
\end{align}
By maximum principle and direct calculation, we have 
\begin{align}
0\ge \sum_{i,j}F^{ij}V_{ij}=&\sum_{i,j}F^{ij}u_{ij\xi\xi}-\sum_{k}(a^k+u_k)\sum_{i,j}F^{ij}u_{ijk}-2\sum_{i}a^i_{i}F^{ii}u_{ii}-F^{ii}D_{ii}a^{k}u_{k}\notag\\
&+F^{ij}b_{ij}+\sum_{i=1}^nF^{ii}u_{ii}^2+B\mathcal{F}\label{FijVij}.
\end{align}
By \eqref{deri1} and \eqref{deri2} in  Lemma \ref{deri12}, we have
\begin{align}
F^{ij}u_{ij\xi\xi}-(a^k+ u_k))F^{ij}u_{ijk}\ge -C_f \sum_{i=1}^n{F^{ii}|\lambda_i|},
\end{align}
where $C_f=2(1+|f^{-1}|_{C^0})|D\log f|^2_{C^0}+|f^{-1}|_{C^0}|D^2f|_{C^0}+\sum_{k}|a^{k}|_{C^0}+ |Du|_{C^{0}}$. Inserting the above into \eqref{FijVij}, we get
\begin{align}
0\ge \sum_{i,j}F^{ij}V_{ij}\ge\sum_{i=1}^n F^{ii}(\lambda_i^2-C|\lambda_i|+B).\label{FijVij2}
\end{align}
If we choose $B=2C^2$, then we get a contradiction from \eqref{FijVij2}. Thus $V$ attains its maximum on the boundary $\partial \Omega$. 

Assume $V(x_0,\xi_0)=\max\limits_{\overline \Omega\times \mathcal{S}^{n-1}} V(x, \xi)$, where $\mathcal{S}^{n-1}$ is the unit sphere in $\mathbb{R}^{n}$. By the above proof, we know $x_0\in \partial \Omega$. 

If $\xi_0$ is the normal direction. By the double normal estimate, we get the proof. 

If $\xi_0$ is non-tangential i.e. $<\xi_0, \nu>\neq 0$. By the decomposition, $\xi_0=a \nu+b \tau$, where $a=<\xi_0, \nu(x_0)>$ and $b=<\xi_0, \tau>$ and $\tau$ is the unit tangential part of $\xi_0$. Then  by $v(x_0, \xi_0)=a^2 v(x_0, \tau)+ b^2  v(x_0,\nu)$, we have $v(x_0, \xi_0)\le v(x_0, \nu)\le C$ and thus $u_{\xi_0\xi_0} \le C$, where we used the double normal estimate.

If $\xi_0=e_1$ is the tangential direction, we refer to \cite{MaQiu2019CMP} for the details of deriving the following inequalities \eqref{V1} and \eqref{V2}.
First, we have an inequality at the boundary point $x_0$:
\begin{equation} \label{V1}
0\leq V_\nu (x_0,e_1)= - u_{11n}(x_0) +C.
\end{equation}
On the other hand,  differentiating $u_\nu = \varphi$ along the tangential  direction twice, considering $\varphi_u \leq 0$ and the uniform convexity of $\partial \Omega$, we have 
\begin{equation}\label{V2}
   -u_{11n}\leq -2 u_{11} \kappa_0+C.
\end{equation}
Combining \eqref{V1} and \eqref{V2}, we obtain
\begin{equation*}
    u_{11}(x_0)\le C(\kappa_0, |\varphi|_{C^{2}}, |u|_{C^1}).
\end{equation*}

\end{proof}

\section{Proofs of Theorem \ref{sltethm1} and Theorem \ref{sltethm2}}
In this section, we use  a priori estimates proved in the previous sections to get the existence.
Let $\Omega_t=t\Omega+(1-t)B_1$. 
Consider the following problem 
\begin{align}\label{existence}
\left\{
\begin{aligned}
    \sum_{i=1}^n \arctan \frac{\lambda_i (D^2 u^t)}{tf+1-t} = \Theta \quad in \quad \Omega_t, \\
    u^t_\nu = -u^t+ t\phi\quad on \quad \partial \Omega_t.
    \end{aligned}
    \right.
\end{align}
%Let $\kappa_0=\min_{\partial \Omega}\min_{1\le i\le n}\kappa_i$. Then $\kappa_i(\partial\Omega_t)\ge \min\{1, \kappa_0\}$.%
Since $\Omega$ is strictly convex, $\Omega_t$ is strictly convex. By Theorem \ref{sltethm1} and  Evans-Krylov-Safonov theory as in \cite{LiebermanTrudinger}, there exists a uniform constant $C$ depending on $n$, $\Omega$, $f$, $\phi$ such that
\begin{align}\label{241261344}
|u^t|_{C^{2,\alpha}}\le C.
\end{align}
Define the set $\mathcal{I}=\{t\in[0,1]: \ \text{problem}\  \eqref{existence} \ \text{has a $C^{2,\alpha}$ solution} \}$.
When $t=0$, $\Omega_0=B_1$, there exists a unique smooth solution.
The openness of $\mathcal{I}$ follows from the implicit function theorem. The closeness follows from the $C^{2,\alpha}$ estimates \eqref{241261344}. Then $\mathcal{I}=[0,1]$ and thus we obtain Theorem
\ref{sltethm1}. 

For Theorem \ref{sltethm2}, we first consider the following approximating equation.
\begin{align}
\left\{
\begin{aligned}
    \sum_{i=1}^n \arctan \frac{\lambda_i (D^2 u^{\varepsilon})}{f(x)} = \Theta \quad in \quad \Omega, \\
    u^\varepsilon_\nu =  -\varepsilon u^\varepsilon+\phi(x)\quad on \quad \partial \Omega.
    \end{aligned}
    \right.
\end{align}
By Theorem \ref{sltethm1}, there exists a unique smooth solution $u^\varepsilon$. Due to Lemma \ref{UniformGradient}, we have
\begin{equation*}
 |\nabla u^\varepsilon|\leq C   
\end{equation*}
independent of $\varepsilon$. Then by $C^0$ estimate \eqref{epsilonU}, there is
a constant $\lambda$, such that
\begin{equation*}
    -\varepsilon u^\varepsilon \rightarrow\lambda \quad \text{ as } \quad \varepsilon \rightarrow 0.
\end{equation*}
So we solve the following classical Neumann equation

\begin{align}
\left\{
\begin{aligned}
    \sum_{i=1}^n \arctan \frac{\lambda_i (D^2 u)}{f(x)} = \Theta \quad in \quad \Omega, \\
    u_\nu =  \lambda+\phi(x)\quad on \quad \partial \Omega.
    \end{aligned}
    \right.
\end{align}
Then we prove uniqueness. Suppose problem (\ref{slte11}) has
two pairs of solutions $(\lambda,u)$ and $(\mu,v)$. Let $a^{ij}=\int_{0}^{1}F^{ij}[(1-t)\frac{D^{2}v}{f}+t\frac{D^{2}u}{f}]dt$,
and $u-v$ satisfies 
\begin{equation}
\begin{cases}
\int^1_0 \frac{d}{dt} \arctan [(1-t)\frac{D^{2}v}{f}+t\frac{D^{2}u}{f}] dt =a^{ij}\frac{(u-v)_{ij}}{f}=0,\\
(u-v)_{\nu}=\lambda-\mu.
\end{cases}
\end{equation}
So $u-v$ attains its maximum and minimum at the boundary. This implies that $\lambda=\mu$.
Finally, applying the Hopf lemma from \cite[Theorem 3.6]{gilbarg2015elliptic}, we deduce
$u-v=c$.

\textbf{Acknowledgements}
Both authors are grateful to Professor Xi-Nan Ma for his interest in this paper. 
 The first named author is partially supported by CAS Project for Young Scientists in Basic Research, Grant No.YSBR-031 and grants from the Research Grants Council of the Hong Kong Special Administrative region, China [Project No: CUHK 14300819].

%\textbf{\large Declaration}\\
%\textbf{Conflict of interest.} 
%The authors state that there is no conflict of interest.
\bibliographystyle{acm}
\bibliography{Bib}
\end{document}